%
%
\documentclass[reqno,12pt]{amsart}


\pdfoutput=1


\usepackage{color} 
\usepackage{ifpdf}
\ifpdf
    \usepackage[pdftex]{graphicx}
    \usepackage[pdftex]{hyperref}
    \hypersetup{
        unicode=false,          
        pdftoolbar=true,        
        pdfmenubar=true,        
        pdffitwindow=false,     
        pdfstartview={FitH},    
        pdftitle={MCP Article},      
        pdfauthor={Michael Holst},   
        pdfsubject={Mathematics},    
        pdfcreator={Michael Holst},  
        pdfproducer={Michael Holst}, 
        pdfkeywords={PDE, analysis, mathematical physics}, 
        pdfnewwindow=true,      
        colorlinks=true,        
        linkcolor=red,          
        citecolor=blue,         
        filecolor=magenta,      
        urlcolor=cyan           
    }

    \typeout{====== Invoked by pdflatex ======================}
\else
    \usepackage{graphicx}
    \usepackage{pstricks}

    \typeout{====== Invoked by latex ======================}
\fi

\usepackage{times}
\usepackage{amsfonts}

\usepackage{amsmath}
\usepackage{amsthm}
\usepackage{amssymb}
\usepackage{amsbsy}
\usepackage{amscd}

\usepackage{enumerate}
\usepackage{verbatim}
\usepackage{subfigure}




\newtheorem{theorem}{Theorem}[section]

\newtheorem{lemma}[theorem]{Lemma}

\numberwithin{equation}{section}  





  \newcounter{mnote}
  \setcounter{mnote}{0}
  
  \let\oldmarginpar\marginpar
    \renewcommand\marginpar[1]{\-\oldmarginpar[\raggedleft\footnotesize #1]%
    {\raggedright\footnotesize #1}}



\definecolor{myblue}{rgb}{0.2,0.2,0.7}
\definecolor{mygreen}{rgb}{0,0.6,0}
\definecolor{mycyan}{rgb}{0,0.6,0.6}
\definecolor{myred}{rgb}{0.9,0.2,0.2}
\definecolor{mymagenta}{rgb}{0.9,0.2,0.9}
\definecolor{mywhite}{rgb}{1.0,1.0,1.0}
\definecolor{myblack}{rgb}{0.0,0.0,0.0}

\newcommand{\beq}{\begin{equation}}
\newcommand{\eeq}{\end{equation}}
\newcommand{\beqa}{\begin{eqnarray}}
\newcommand{\eeqa}{\end{eqnarray}}
\setlength{\topmargin}{-0.5in}
\setlength{\textheight}{9.5in}
\setlength{\textwidth}{5.85in}
\setlength{\oddsidemargin}{0.325in}
\setlength{\evensidemargin}{0.325in}
\setlength{\marginparwidth}{1.0in}

\setcounter{tocdepth}{2}

\begin{document}

\title[APPLICATIONS OF DD AND PUM IN PHYSICS AND GEOMETRY]
      {Applications of Domain Decomposition and Partition of Unity Methods in Physics and Geometry}


\author[M. Holst]{Michael Holst}
\email{mholst@math.ucsd.edu}

\address{Department of Mathematics,
         University of California, San Diego
         9500 Gilman Drive, Dept. 0112,
         La Jolla, CA 92093-0112 USA}

\thanks{The author was supported in part by NSF CAREER Award~9875856, by NSF Grants~0225630, 0208449, 0112413, and by a UCSD Hellman Fellowship.}

\date{September 25, 2002}


\begin{abstract}
We consider a class of adaptive multilevel domain
decomposition-like algorithms, built from a combination of adaptive multilevel
finite element, domain decomposition, and partition of unity methods.  These 
algorithms have several interesting features such as very low communication 
requirements, and they inherit a simple and elegant approximation theory 
framework from partition of unity methods.  They are also very easy to use 
with highly complex sequential adaptive finite element packages, requiring 
little or no modification of the underlying sequential finite element 
software.  The parallel algorithm can be implemented as a simple loop which 
starts off a sequential local adaptive solve on a collection of processors
simultaneously.  We first review the Partition of Unity Method (PUM) of
Babu\v{s}ka and Melenk, and outline the PUM approximation theory framework.
We then describe a variant we refer to here as the Parallel Partition of 
Unity Method (PPUM), which is a combination of the Partition of Unity Method
with the parallel adaptive algorithm of Bank and Holst.  We then derive two 
global error estimates for PPUM, by exploiting the PUM analysis framework it 
inherits, and by employing some recent local estimates of Xu and Zhou.  We 
then discuss a duality-based variant of PPUM which is more appropriate for 
certain applications, and we derive a suitable variant of the PPUM 
approximation theory framework.  Our implementation of PPUM-type algorithms 
using the {\sc FETK} and {\sc MC} software packages is described.  We then 
present a short numerical example involving the Einstein constraints arising 
in gravitational wave models.
\end{abstract}

\maketitle


\vspace*{-1.2cm}
{\footnotesize
\tableofcontents
}

\section[Introduction]
        {Introduction}
\label{Holst:sec:intro}

In this article we consider a class of adaptive multilevel domain
decomposition-like algorithms, built from a combination of adaptive multilevel
finite element, domain decomposition, and partition of unity methods.
These algorithms have several interesting features such as very
low communication requirements, and they inherit a simple and elegant
approximation theory framework from partition of unity methods.
They are also very easy to use with highly complex sequential adaptive
finite element packages, requiring little or no modification of the
underlying sequential finite element software.
The parallel algorithm can be implemented as a simple loop which starts off
a sequential local adaptive solve on a collection of processors
simultaneously.

We first review the Partition of Unity Method (PUM) of
Babu\v{s}ka and Melenk in Section~\ref{Holst:sec:pum},
and outline the PUM approximation theory framework.
In Section~\ref{Holst:sec:ppum}, we describe a variant we
refer to here as the Parallel Partition of Unity Method (PPUM),
which is a combination of the Partition of Unity Method
with the parallel adaptive algorithm from~\cite{Bank:2000:NPP}.
We then derive two global error estimates for PPUM, by exploiting
the PUM analysis framework it inherits, and by employing some recent
local estimates of Xu and Zhou~\cite{Xu:2000:LPF}.
We then discuss a duality-based variant of PPUM in
Section~\ref{Holst:sec:duality} which is more appropriate for certain
applications, and we derive a suitable variant of the 
PPUM approximation theory framework.
Our implementation of PPUM-type algorithms using the {\sc FEtk} and {\sc MC}
software packages is described in Section~\ref{Holst:sec:mc}.
We then present a short numerical example in Section~\ref{Holst:sec:example1}
involving the Einstein constraints arising in gravitational wave models.

\section[The Partition of Unity Method (PUM)]
        {The Partition of Unity Method (PUM) of Babu\v{s}ka and Melenk}
\label{Holst:sec:pum}

We first briefly review the partition of unity method (PUM) of
Babu\v{s}ka and Melenk~\cite{Babuska:1997:PUF}.
Let $\Omega \subset \mathbb{R}^d$ be an open set and let $\{ \Omega_i \}$
be an open cover of $\Omega$ with a bounded local overlap property:
For all $x \in \Omega$, there exists a constant $M$ such that
\begin{equation}
   \label{Holst:eqn:pum_overlap}
\sup_i \{~ i ~|~ x \in \Omega_i ~\} \le M.
\end{equation}
A Lipschitz {\em partition of unity} $\{ \phi_i \}$
subordinate to the cover $\{ \Omega_i \}$ satisfies the
following five conditions:
\begin{eqnarray}
\sum_i \phi_i(x) &\equiv& 1, ~~~ \forall x \in \Omega, 
\label{Holst:eqn:pum_pou1}
    \\
\phi_i &\in& C^k(\Omega) ~~~ \forall i, ~~~ (k \ge 0),
\label{Holst:eqn:pum_pou2}
    \\
\text{supp}~ \phi_i &\subset& \overline{\Omega}_i, ~~~ \forall i,
\label{Holst:eqn:pum_pou3}
    \\
\| \phi_i \|_{L^{\infty}(\Omega)} &\le& C_{\infty}, ~~~ \forall i,
\label{Holst:eqn:pum_pou4}
    \\
\| \nabla \phi_i \|_{L^{\infty}(\Omega)} 
   &\le& \frac{C_G}{\text{diam}(\Omega_i)}, ~~~ \forall i.
\label{Holst:eqn:pum_pou5}
\end{eqnarray}
Several explicit constructions of partitions of unity
satisfying~(\ref{Holst:eqn:pum_pou1})--(\ref{Holst:eqn:pum_pou5}) exist.
The simplest construction in the case of a polygon
$\Omega \subset \mathbb{R}^d$ employs global $C^0$ piecewise linear finite
element basis functions defined on a simplex mesh
subdivision $\mathcal{S}$ of $\Omega$.
The $\{\Omega_i\}$ are first built by first constructing a disjoint
partitioning $\{\Omega_i^{\circ}\}$ of $\mathcal{S}$ using e.g. spectral or
inertial bisection~\cite{Bank:2000:NPP}.
Each of the disjoint $\Omega_i^{\circ}$ are extended to define $\Omega_i$
by considering all boundary vertices of $\Omega_i^{\circ}$;
all simplices of neighboring $\Omega_j^{\circ}$, $j\ne i$
which are contained in the boundary vertex $1$-rings of $\Omega_i^{\circ}$
are added to $\Omega_i^{\circ}$ to form $\Omega_i$.
This procedure produces the smallest overlap for the $\{ \Omega_i \}$,
such that the properties~(\ref{Holst:eqn:pum_pou1})--(\ref{Holst:eqn:pum_pou4}) are satisfied
by the resulting $\{\phi_i\}$ built from the nodal $C^0$ piecewise linear
finite element basis functions.
Property~(\ref{Holst:eqn:pum_pou5}) is also satisfied, but $C_G$ will depend on the
diameter of the overlap simplices.
More sophisticated constructions with superior properties are possible;
see e.g.~\cite{Griebel:2000:PPU,Shepard:1968:TDI}.

The {\em partition of unity method (PUM)}
builds an approximation $u_{ap} = \sum_i \phi_i v_i$ where the
$v_i$ are taken from the local approximation spaces:
\begin{equation}
   \label{Holst:eqn:pum_local_spaces}
V_i \subset C^k(\Omega \cap \Omega_i)
     \subset H^1(\Omega \cap \Omega_i), ~~~ \forall i, ~~~ (k \ge 0).
\end{equation}
The following simple lemma makes possible several useful results.
\begin{lemma} \label{Holst:lemma:pum}
Let $w,w_i \in H^1(\Omega)$ with
$\text{supp}~ w_i \subseteq \overline{\Omega \cap \Omega_i}$.
Then
\begin{eqnarray*}
\sum_i \|w\|_{H^k(\Omega_i)}^2 
   &\le& M \|w\|_{H^k(\Omega)}^2, ~~~~ k=0,1
\\
\| \sum_i w_i \|_{H^k(\Omega)}^2 
   &\le& M \sum_i \|w_i\|_{H^k(\Omega \cap \Omega_i)}^2, ~~~~ k=0,1
\end{eqnarray*}
\end{lemma}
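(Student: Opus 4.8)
The plan is to reduce both inequalities to the bounded local overlap property~\eqref{Holst:eqn:pum_overlap}, which I would first restate in the pointwise form
\[
\sum_i \chi_{\Omega_i}(x) \le M \qquad \text{for a.e. } x \in \Omega,
\]
where $\chi_{\Omega_i}$ is the characteristic function of $\Omega_i$. Throughout I read $\|\cdot\|_{H^k(\Omega_i)}$ as $\|\cdot\|_{H^k(\Omega \cap \Omega_i)}$, since $w$ is only defined on $\Omega$, and I write $g = |w|^2$ when $k=0$ and $g = |w|^2 + |\nabla w|^2$ when $k=1$, so that $\int_\Omega g = \|w\|_{H^k(\Omega)}^2$ and $\int_{\Omega}\chi_{\Omega_i}\,g = \|w\|_{H^k(\Omega\cap\Omega_i)}^2$.

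For the first inequality I would simply insert the characteristic functions and interchange the sum and integral, which is legitimate by Tonelli's theorem since every term is nonnegative:
\[
\sum_i \|w\|_{H^k(\Omega_i)}^2 = \sum_i \int_{\Omega} \chi_{\Omega_i}(x)\, g(x)\, dx = \int_\Omega \Big(\sum_i \chi_{\Omega_i}(x)\Big) g(x)\, dx \le M \int_\Omega g(x)\, dx,
\]
and the last integral is exactly $M\|w\|_{H^k(\Omega)}^2$.

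For the second inequality the key step is a weighted Cauchy--Schwarz estimate that again invokes the overlap bound. Since $\text{supp}~ w_i \subseteq \overline{\Omega \cap \Omega_i}$, I have $w_i = \chi_{\Omega_i} w_i$ almost everywhere (the two differ only on the null set $\partial\Omega_i$, using the regularity of the cover), so for a.e.\ $x$,
\[
\Big| \sum_i w_i(x) \Big|^2 = \Big| \sum_i \chi_{\Omega_i}(x)\, w_i(x) \Big|^2 \le \Big( \sum_i \chi_{\Omega_i}(x) \Big)\Big( \sum_i \chi_{\Omega_i}(x)\, |w_i(x)|^2 \Big) \le M \sum_i |w_i(x)|^2.
\]
Integrating over $\Omega$ and using $\int_\Omega |w_i|^2 = \|w_i\|_{L^2(\Omega \cap \Omega_i)}^2$ settles the case $k=0$. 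For $k=1$ I would repeat the identical estimate on each component of $\nabla w_i$, noting that the distributional gradient does not enlarge support, so $\text{supp}~\nabla w_i \subseteq \overline{\Omega \cap \Omega_i}$ as well; summing the resulting $L^2$ bounds for $\sum_i w_i$ and for $\sum_i \nabla w_i$ yields the $H^1$ estimate.

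The argument is essentially bookkeeping resting on the single overlap bound, so there is no deep obstacle. The only points requiring care are the standing convention that norms over $\Omega_i$ mean norms over $\Omega \cap \Omega_i$, the fact that $\text{supp}~\nabla w_i \subseteq \overline{\Omega \cap \Omega_i}$ in the $k=1$ case, and the observation that the closures $\overline{\Omega_i}$ may overlap more than $M$-fold only on the null set $\bigcup_i \partial\Omega_i$, so that the replacement $w_i = \chi_{\Omega_i} w_i$ and the pointwise counting are valid almost everywhere.
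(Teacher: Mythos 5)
Your proof is correct and is essentially the argument the paper itself delegates to the citation of Babu\v{s}ka and Melenk: the bounded-overlap property used pointwise, with Tonelli's theorem giving the first inequality and a pointwise Cauchy--Schwarz estimate giving the second. The only wrinkle is your replacement $w_i = \chi_{\Omega_i} w_i$ a.e., which requires that $\bigcup_i \partial\Omega_i$ be a Lebesgue null set --- a mild regularity assumption on the cover that the paper (and the standard reference) glosses over, and which you correctly flag as the one point needing care.
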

\begin{proof}
The proof follows from~(\ref{Holst:eqn:pum_overlap})
and~(\ref{Holst:eqn:pum_pou1})--(\ref{Holst:eqn:pum_pou5});
see~\cite{Babuska:1997:PUF}.
\end{proof}
The basic approximation properties of PUM following from~\ref{Holst:lemma:pum}
are as follows.
\begin{theorem} [Babu\v{s}ka and Melenk~\cite{Babuska:1997:PUF}]
   \label{Holst:thm:pum}
If the local spaces $V_i$ have the
following approximation properties:
\begin{eqnarray*}
\|u-v_i\|_{L^2(\Omega \cap \Omega_i)} &\le& \epsilon_0(i),
   ~~~ \forall i, \\
\|\nabla(u-v_i)\|_{L^2(\Omega \cap \Omega_i)} &\le& \epsilon_1(i), 
   ~~~ \forall i,
\end{eqnarray*}
then the following {\em a~priori} global error estimates hold:
\begin{eqnarray*}
\|u-u_{ap}\|_{L^2(\Omega)}
   &\le& \sqrt{M} C_{\infty} \left( \sum_i \epsilon_0^2(i) \right)^{1/2}, \\
\|\nabla(u-u_{ap})\|_{L^2(\Omega)}
   &\le& \sqrt{2M} \left( 
     \sum_i \left( \frac{C_G}{\text{diam}(\Omega_i)} \right)^2
         \epsilon_1^2(i)
     + C_{\infty}^2 \epsilon_0^2(i) \right)^{1/2}.
\end{eqnarray*}
\end{theorem}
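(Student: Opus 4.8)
The plan is to exploit the single algebraic identity that drives the entire partition of unity error analysis: since $\sum_i \phi_i \equiv 1$ on $\Omega$ by~(\ref{Holst:eqn:pum_pou1}), we have $u = \sum_i \phi_i u$, so the global error decomposes into a sum of \emph{locally supported} pieces,
\[
 u - u_{ap} = \sum_i \phi_i u - \sum_i \phi_i v_i = \sum_i \phi_i (u - v_i) =: \sum_i w_i.
\]
By~(\ref{Holst:eqn:pum_pou3}) we have $\text{supp}~\phi_i \subseteq \overline{\Omega}_i$, hence $\text{supp}~w_i \subseteq \overline{\Omega \cap \Omega_i}$, so each $w_i$ meets the hypothesis of Lemma~\ref{Holst:lemma:pum}. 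This structural observation is what makes both estimates fall directly out of the lemma; everything else is bookkeeping with the pointwise bounds~(\ref{Holst:eqn:pum_pou4})--(\ref{Holst:eqn:pum_pou5}).

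For the $L^2$ estimate I would apply the second inequality of Lemma~\ref{Holst:lemma:pum} with $k=0$ to $\sum_i w_i$, obtaining $\|u-u_{ap}\|_{L^2(\Omega)}^2 \le M \sum_i \|w_i\|_{L^2(\Omega \cap \Omega_i)}^2$. On each patch I bound $\|w_i\|_{L^2(\Omega\cap\Omega_i)} \le \|\phi_i\|_{L^\infty(\Omega)}\,\|u-v_i\|_{L^2(\Omega\cap\Omega_i)} \le C_\infty\,\epsilon_0(i)$ using~(\ref{Holst:eqn:pum_pou4}) and the local approximation hypothesis on $V_i$. Summing over $i$ and taking the square root yields the claimed bound $\sqrt{M}\,C_\infty \big(\sum_i \epsilon_0^2(i)\big)^{1/2}$.

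For the gradient estimate the extra ingredient is the product rule, $\nabla w_i = (\nabla\phi_i)(u-v_i) + \phi_i\,\nabla(u-v_i)$, whose two terms are again supported in $\overline{\Omega\cap\Omega_i}$; applying the $k=0$ inequality of Lemma~\ref{Holst:lemma:pum} componentwise to $\nabla w_i$ (i.e. the same Cauchy--Schwarz-plus-bounded-overlap mechanism, via~(\ref{Holst:eqn:pum_overlap})) and summing over components gives $\|\nabla(u-u_{ap})\|_{L^2(\Omega)}^2 \le M \sum_i \|\nabla w_i\|_{L^2(\Omega\cap\Omega_i)}^2$. On each patch I split with the elementary inequality $\|a+b\|^2 \le 2\|a\|^2 + 2\|b\|^2$ — this is exactly where the factor $2$, and hence the constant $\sqrt{2M}$, enters — and bound the two pieces using~(\ref{Holst:eqn:pum_pou5}) on $\nabla\phi_i$ and~(\ref{Holst:eqn:pum_pou4}) on $\phi_i$ together with the local errors. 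Collecting the patch contributions produces the stated $\sqrt{2M}$-estimate.

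The proof is short, and I expect the only genuine care to be in two places. First, one must check that differentiation preserves the support property so that the overlap estimate still applies to the gradients; this is immediate from~(\ref{Holst:eqn:pum_pou3}). Second — and this is the step I regard as the crux — one must resist estimating the full $H^1$ norm in one stroke and instead keep the two product-rule terms separate, so that the $L^\infty$ bound~(\ref{Holst:eqn:pum_pou4}) on $\phi_i$ is paired with the gradient approximation error while the gradient bound~(\ref{Holst:eqn:pum_pou5}) on $\nabla\phi_i$, which carries the singular factor $1/\text{diam}(\Omega_i)$, is paired with the $L^2$ approximation error. Getting this pairing right is the one place the estimate can go wrong.
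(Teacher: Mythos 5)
Your proof is correct and is essentially identical to the paper's own argument: the paper's one-line proof says precisely to write $u - u_{ap} = \sum_i \phi_i (u - v_i)$ and apply Lemma~\ref{Holst:lemma:pum} with $w_i = \phi_i(u - v_i)$, which is exactly what you carry out in detail, product rule and all. One substantive point, though: the pairing you single out as the crux --- $C_\infty$ from~(\ref{Holst:eqn:pum_pou4}) multiplying the gradient error $\epsilon_1(i)$, and $C_G/\text{diam}(\Omega_i)$ from~(\ref{Holst:eqn:pum_pou5}) multiplying the $L^2$ error $\epsilon_0(i)$ --- is indeed what the product rule forces, and it agrees with the original theorem of Babu\v{s}ka and Melenk; but it is the \emph{opposite} of the pairing printed in the statement of Theorem~\ref{Holst:thm:pum} above, which attaches $\left(C_G/\text{diam}(\Omega_i)\right)^2$ to $\epsilon_1^2(i)$ and $C_\infty^2$ to $\epsilon_0^2(i)$. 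In other words, the stated gradient bound contains a misprint (the subscripts $0$ and $1$ are swapped), and your argument proves the corrected form rather than the literal statement; this should be read as you having caught the typo, not as a gap in your proof.
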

\begin{proof}
This follows from Lemma~\ref{Holst:lemma:pum} by taking
$u-u_{ap} = \sum_i \phi_i(u-v_i)$ and then by using
$w_i = \phi_i(u-v_i)$ in Lemma~\ref{Holst:lemma:pum}.
\end{proof}

Consider now the following linear elliptic problem:
\begin{equation}
   \label{Holst:eqn:pde_strong}
\begin{array}{rcl}
- \nabla \cdot (a \nabla u) &=& f ~\text{in}~ \Omega, \\
                          u &=& 0 ~\text{on}~ \partial \Omega,
\end{array}
\end{equation}
where $a_{ij} \in W^{1,\infty}(\Omega)$, $f \in L^2(\Omega)$,
$a_{ij} \xi_i \xi_j \ge a_0 > 0$, $\forall \xi_i \ne 0$,
where $\Omega \subset \mathbb{R}^d$ is a convex polyhedral domain.
A weak formulation is:
\begin{equation}
   \label{Holst:eqn:pde_weak}
\text{Find}~u \in H^1_0(\Omega) ~\text{such~that}~
     \langle F(u),v \rangle = 0, ~~~\forall v \in H^1_0(\Omega),
\end{equation}
where
$$
\langle F(u),v \rangle = \int_{\Omega} a \nabla u \cdot \nabla v ~dx
                       - \int_{\Omega} f v ~dx.
$$
A general Galerkin approximation is the solution to the subspace problem:
\begin{equation}
   \label{Holst:eqn:pde_galerkin}
\text{Find}~ u_{ap} \in V \subset H^1_0(\Omega) ~\text{s.t.}~
    \langle F(u_{ap}),v \rangle = 0, ~\forall v \in V \subset H^1_0(\Omega).
\end{equation}
With PUM, the subspace $V$ for the Galerkin approximation
is taken to be the globally coupled {\em PUM space}
(cf.~\cite{Griebel:2000:PPU}):
$$
V = \left\{~ v ~|~ v = \sum_i \phi_i v_i, ~ v_i \in V_i ~\right\}
     \subset H^1(\Omega),
$$
If error estimates are available for the quality of the local solutions
produced in the local spaces, then the PUM approximation theory
framework given in Theorem~\ref{Holst:thm:pum} guarantees a global
solution quality.

\section[A Parallel Partition of Unity Method (PPUM)]
        {A Parallel Partition of Unity Method (PPUM)}
\label{Holst:sec:ppum}

A new approach to the use of parallel computers with adaptive finite
element methods was presented recently in~\cite{Bank:2000:NPP}.
The following variant of the algorithm in~\cite{Bank:2000:NPP} is
described in~\cite{Holst:2001:ANT},
which we refer to as the {\em Parallel Partition of Unity Method} (or PPUM).
This variant replaces the final global smoothing iteration
in~\cite{Bank:2000:NPP}
with a reconstruction based on Babu\v{s}ka and Melenk's original
Partition of Unity Method~\cite{Babuska:1997:PUF}, which provides
some additional approximation theory structure.

\noindent
{\em
{\bf Algorithm}
{\em (PPUM - Parallel Partition of
             Unity Method~\cite{Bank:2000:NPP,Holst:2001:ANT})}
\begin{center}
\begin{minipage}{5.0in}
\begin{enumerate}
\item Discretize and solve the problem using a global coarse mesh.
\item Compute {\em a posteriori} error estimates using the coarse solution, and decompose the mesh to achieve equal error using weighted spectral or inertial bisection.
\item Give the entire mesh to a collection of processors, where each processor will perform a completely independent multilevel adaptive solve, restricting local refinement to only an assigned portion of the domain.  The portion of the domain assigned to each processor coincides with one of the domains produced by spectral bisection with some overlap (produced by conformity algorithms, or by explicitly enforcing substantial overlap).  When a processor has reached an error tolerance locally, computation stops on that processor.
\item Combine the independently produced solutions using a partition of unity subordinate to the overlapping subdomains.
\end{enumerate}
\end{minipage}
\end{center}
}

While the PPUM algorithm seems to ignore the global coupling of the
elliptic problem, recent results on local error estimation~\cite{Xu:2000:LPF},
as well as some not-so-recent results on interior
estimates~\cite{Nitsche:1974:IER},
support this as provably good in some sense.
The principle idea underlying the results
in~\cite{Nitsche:1974:IER,Xu:2000:LPF} is that while
elliptic problems are globally coupled, this global coupling is essentially
a ``low-frequency'' coupling, and can be handled on the initial mesh which
is much coarser than that required for approximation accuracy considerations.
This idea has been exploited, for example, in~\cite{Xu:1996:TGD,Xu:2000:LPF}, 
and is
why the construction of a coarse problem in overlapping domain decomposition
methods is the key to obtaining convergence rates which are independent
of the number of subdomains (c.f.~\cite{Xu:1992:IMS}).
An example showing the types of local refinements that occur within each
subdomain is depicted in Figure~\ref{Holst:fig:parallel}.
\begin{figure}[ht]
\begin{center}
\mbox{\includegraphics[width=0.3\textwidth]{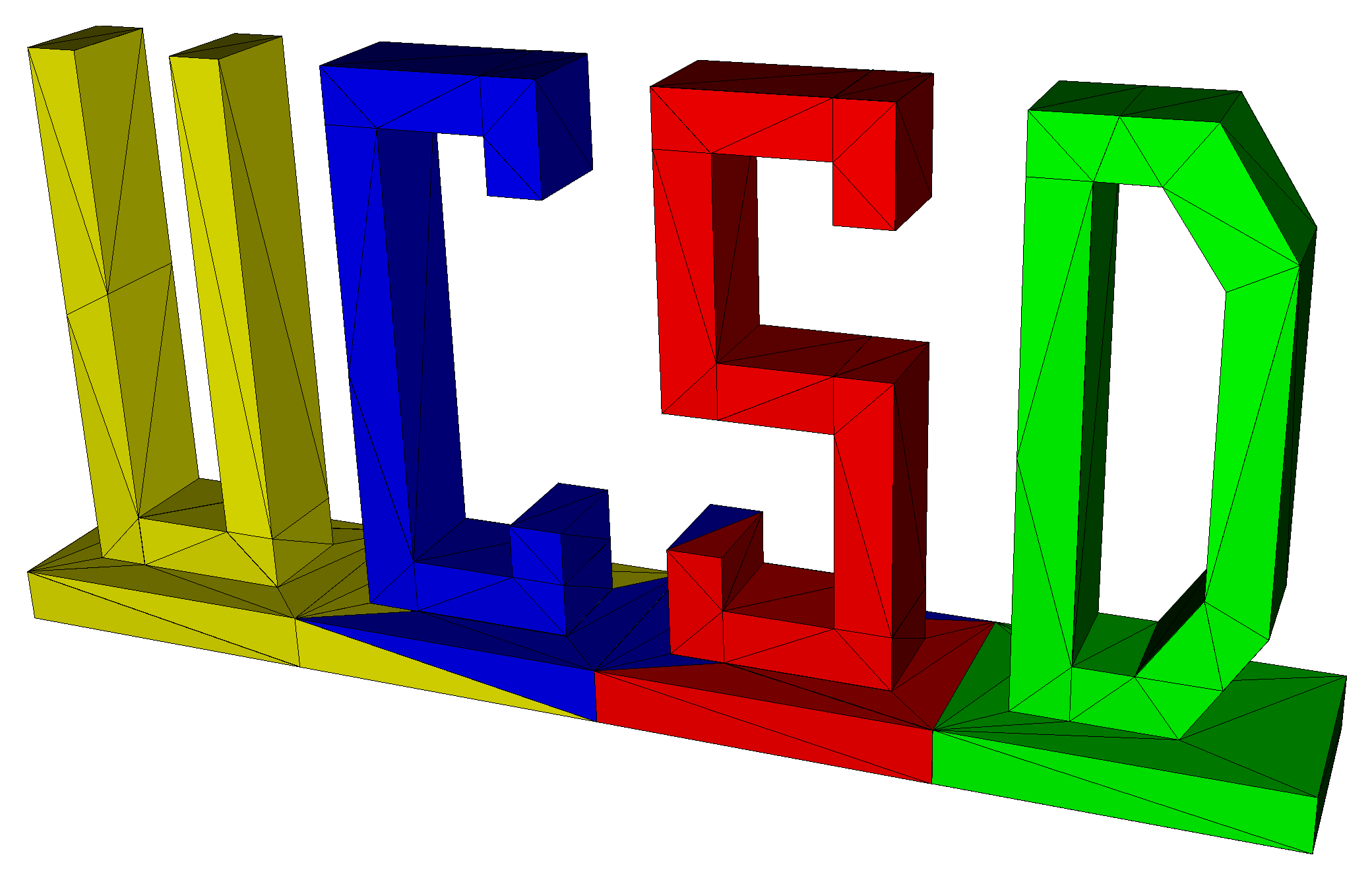}}
\mbox{\includegraphics[width=0.3\textwidth]{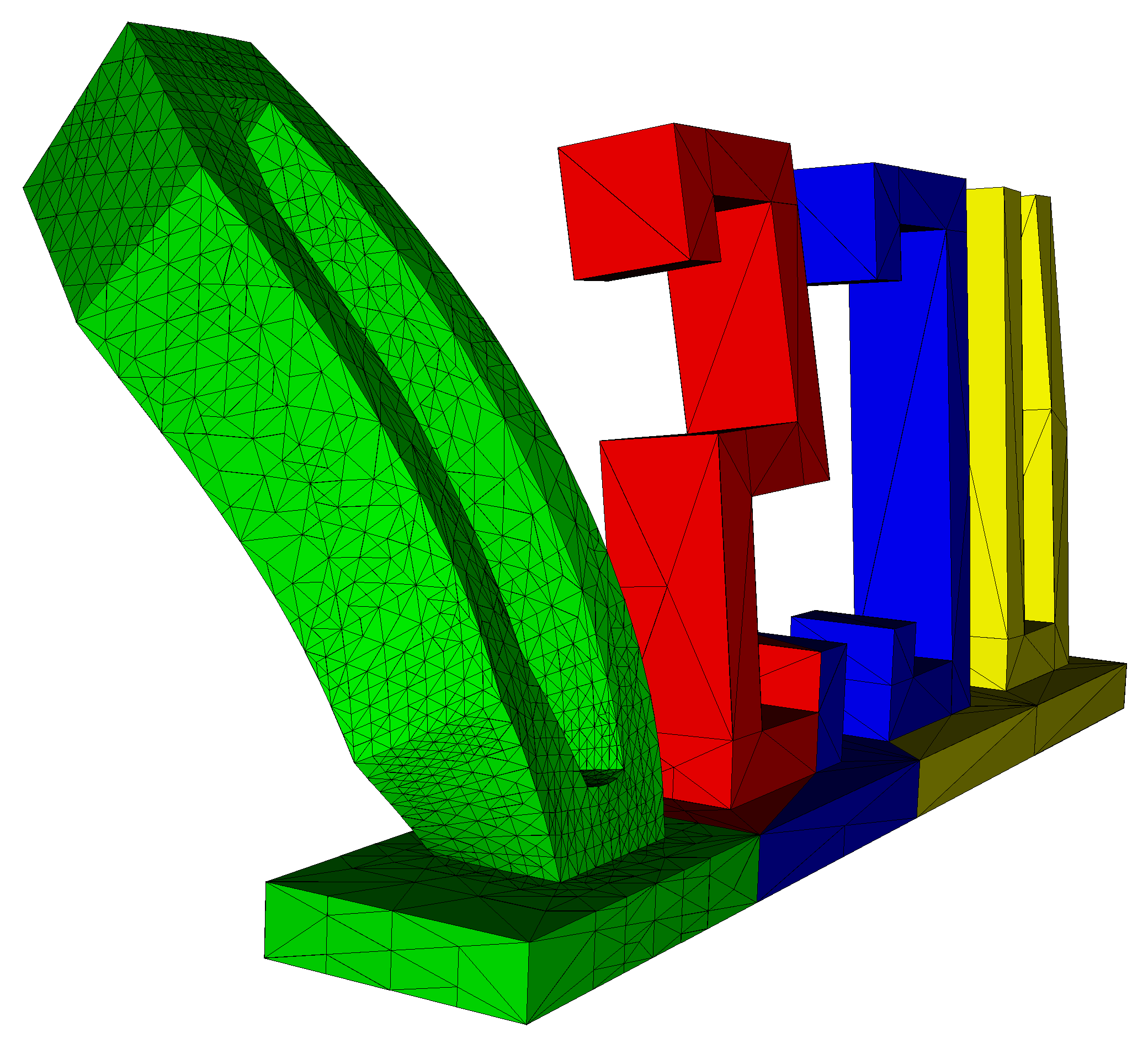}}
\mbox{\includegraphics[width=0.3\textwidth]{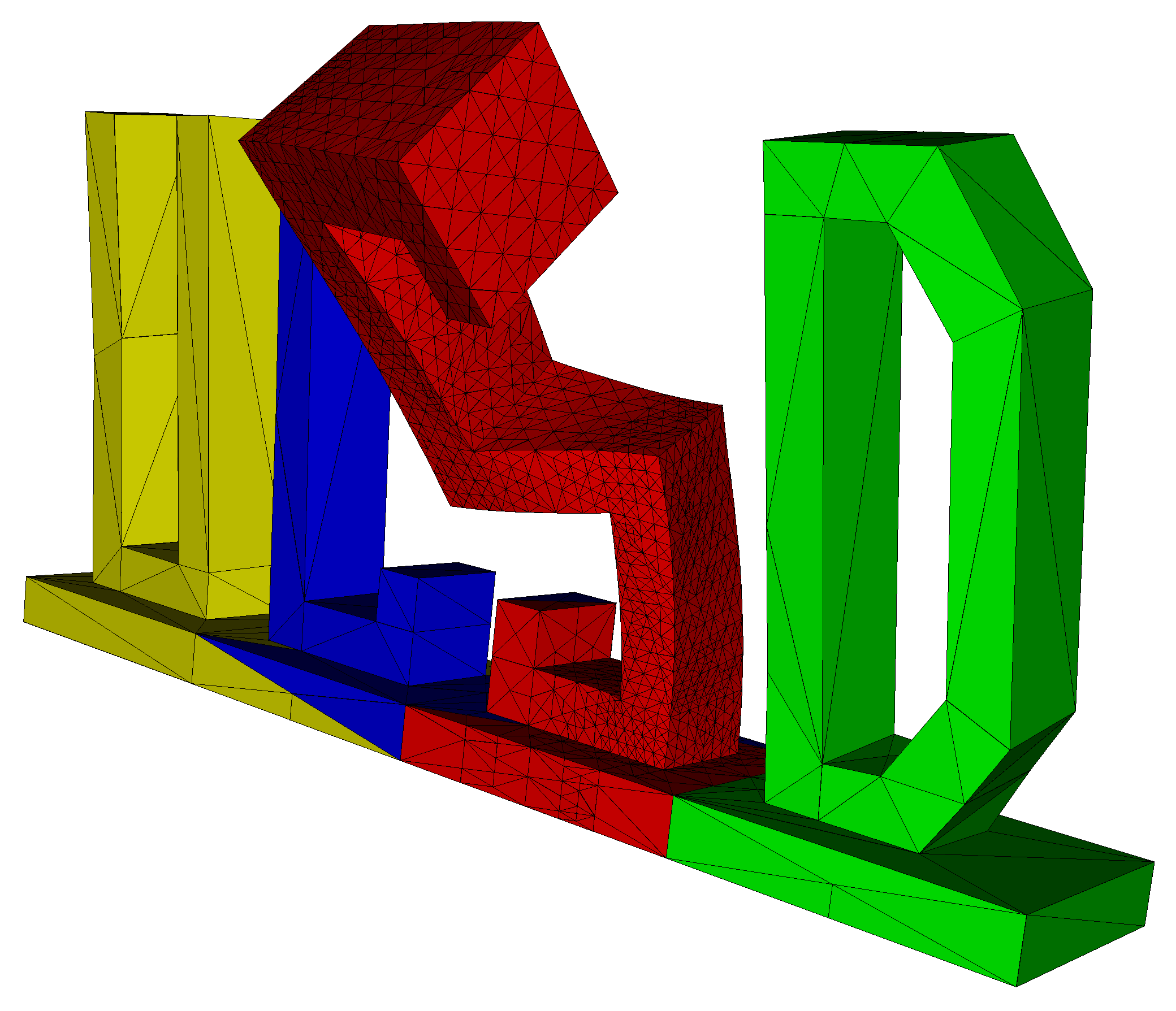}}
\end{center}
\caption{Example showing the types of local refinements created by PPUM.}
  \label{Holst:fig:parallel}
\end{figure}

To illustrate how PPUM can produce a quality global solution,
we will give a global error estimate for PPUM solutions.
This analysis can also be found in~\cite{Holst:2001:ANT}.
We can view PPUM as building a PUM approximation
$u_{pp} = \sum_i \phi_i v_i$ where the $v_i$ are taken from the
local spaces:
\begin{equation}
   \label{Holst:eqn:ppum_local_spaces}
V_i = \mathcal{X}_i V_i^g \subset C^k(\Omega \cap \Omega_i)
     \subset H^1(\Omega \cap \Omega_i), ~~~ \forall i, ~~~ (k \ge 0),
\end{equation}
where $\mathcal{X}_i$ is the characteristic function for $\Omega_i$, and where
\begin{equation}
   \label{Holst:eqn:ppum_local_spaces2}
V_i^g \subset C^k(\Omega) \subset H^1(\Omega), ~~~ \forall i, ~~~ (k \ge 0).
\end{equation}
In PPUM, the global spaces $V_i^g$
in~(\ref{Holst:eqn:ppum_local_spaces})--(\ref{Holst:eqn:ppum_local_spaces2})
are built from locally enriching an initial coarse global space $V_0$
by locally adapting the finite element mesh on which $V_0$ is built.
(This is in contrast to classical overlapping Schwarz
methods where local spaces are often built through enrichment of
$V_0$ by locally adapting the mesh on which $V_0$ is built,
and then removing the portions of the mesh exterior to the
adapted region.)
The PUM space $V$ is then
\begin{eqnarray*}
V &=& \left\{~ v ~|~ v = \sum_i \phi_i v_i, ~ v_i \in V_i ~\right\} \\
  &=& \left\{~ v ~|~ v = \sum_i \phi_i \mathcal{X}_i v_i^g = \sum_i \phi_i v_i^g, 
     ~ v_i^g \in V_i^g ~\right\} \subset H^1(\Omega).
\nonumber
\end{eqnarray*}

In contrast to the approach in PUM where one seeks a global
Galerkin solution in the PUM space as in~(\ref{Holst:eqn:pde_galerkin}),
the PPUM algorithm described here and in~\cite{Holst:2001:ANT} builds a
global approximation $u_{pp}$ to the solution to~(\ref{Holst:eqn:pde_weak})
from decoupled {\em local} Galerkin solutions:
\begin{equation}
   \label{Holst:eqn:ppum_upp}
u_{pp} = \sum_i \phi_i u_i
       = \sum_i \phi_i u_i^g,
\end{equation}
where each $u_i^g$ satisfies:
\begin{equation}
   \label{Holst:eqn:ppum_uig}
\text{Find}~ u_i^g \in V_i^g ~\text{such~that}~
       \langle F(u_i^g),v_i^g \rangle = 0, ~~~\forall v_i^g \in V_i^g.
\end{equation}
We have the following global error estimate for the approximation $u_{pp}$ 
in~(\ref{Holst:eqn:ppum_upp}) built from~(\ref{Holst:eqn:ppum_uig})
using the local PPUM parallel algorithm.
\begin{theorem} \label{Holst:thm:ppum}
Assume the solution to~(\ref{Holst:eqn:pde_strong}) satisfies
$u \in H^{1+\alpha}(\Omega)$, $\alpha > 0$, that
quasi-uniform meshes of sizes $h$ and $H > h$ are used for $\Omega_i^0$
and $\Omega \backslash \Omega_i^0$ respectively,
and that $\text{diam}(\Omega_i) \ge 1/Q > 0 ~~\forall i$.
If the local solutions are built from $C^0$ piecewise linear finite elements,
then the global solution $u_{pp}$ in~(\ref{Holst:eqn:ppum_upp}) produced by
Algorithm PPUM satisfies the following global error bounds:
\begin{eqnarray*}
\|u-u_{pp}\|_{L^2(\Omega)}
   &\le& \sqrt{PM} C_{\infty} 
         \left( C_1 h^{\alpha} + C_2 H^{1+\alpha} \right), \\
\|\nabla(u-u_{pp})\|_{L^2(\Omega)}
   &\le& \sqrt{2PM (Q^2 C_G^2 + C_{\infty}^2)}
         \left( C_1 h^{\alpha} + C_2 H^{1+\alpha} \right),
\end{eqnarray*}
where $P = $ number of local spaces $V_i$.
Further, if $H \le h^{\alpha/(1+\alpha)}$ then:
\begin{eqnarray*}
\|u-u_{pp}\|_{L^2(\Omega)}
   &\le& \sqrt{PM} C_{\infty} \max\{C_1,C_2\} h^{\alpha}, \\
\|\nabla(u-u_{pp})\|_{L^2(\Omega)}
    &\le& \sqrt{2PM (Q^2 C_G^2 + C_{\infty}^2)} \max\{C_1,C_2\} h^{\alpha},
\end{eqnarray*}
so that the solution produced by Algorithm PPUM
is of optimal order in the $H^1$-norm.
\end{theorem}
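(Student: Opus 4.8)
The plan is to observe that the PPUM approximation $u_{pp} = \sum_i \phi_i u_i^g$ in~(\ref{Holst:eqn:ppum_upp}) is \emph{exactly} a partition of unity approximation $u_{ap} = \sum_i \phi_i v_i$ with the particular choice $v_i = u_i^g$. Consequently the abstract a~priori framework of Theorem~\ref{Holst:thm:pum} applies verbatim, and the entire theorem reduces to two tasks: producing the local approximation quantities $\epsilon_0(i)$ and $\epsilon_1(i)$ for the decoupled local Galerkin solutions $u_i^g$ defined by~(\ref{Holst:eqn:ppum_uig}), and then doing the bookkeeping on the resulting sums.

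First I would establish the local error estimates, which is where the main obstacle lies. The difficulty is that $u_i^g$ is only a \emph{local} Galerkin solution: it is the genuine finite element solution of~(\ref{Holst:eqn:pde_weak}) on a composite mesh that is fine (size $h$) on the refinement region $\Omega_i^0$ and coarse (size $H$) on $\Omega\setminus\Omega_i^0$, rather than the global Galerkin solution on a uniformly fine mesh. The $L^2$ quantity $\epsilon_0(i)$ is the easy one: since $u_i^g$ is a Galerkin solution on a mesh whose coarsest part has size $H$, a global Aubin--Nitsche duality argument, together with $u\in H^{1+\alpha}(\Omega)$ and the $C^0$ piecewise linear hypothesis, gives $\epsilon_0(i) \le \|u-u_i^g\|_{L^2(\Omega)} \le C_2 H^{1+\alpha} \le C_1 h^\alpha + C_2 H^{1+\alpha}$. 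The hard part will be the gradient quantity $\epsilon_1(i)$, for which I would invoke the local a~priori estimates of Xu and Zhou~\cite{Xu:2000:LPF}. For a locally-Galerkin finite element function these take the schematic form
\[
\|\nabla(u - u_i^g)\|_{L^2(\Omega\cap\Omega_i)}
\le C\left( \inf_{v\in V_i^g}\|\nabla(u-v)\|_{L^2(\Omega_i^0)} + \|u-u_i^g\|_{L^2(\Omega)} \right),
\]
valid when the PUM patch $\Omega_i$ is compactly contained in the fine region $\Omega_i^0$. The first term is purely local: on the fine mesh of size $h$, interpolation estimates for an $H^{1+\alpha}$ function in $C^0$ piecewise linears bound it by $C_1 h^\alpha$. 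The second term is global but of lower order, and is controlled by the coarse $L^2$ bound already obtained, giving $C_2 H^{1+\alpha}$. Combining these yields $\epsilon_1(i) \le C_1 h^\alpha + C_2 H^{1+\alpha}$ for all $i$. The delicate point, which is precisely the content the Xu--Zhou interior estimates supply, is that the error splits cleanly into a fine-mesh local term and a coarse-mesh global term with the correct powers of $h$ and $H$.

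With both local estimates in hand, the global bounds follow by substitution into Theorem~\ref{Holst:thm:pum}. Since there are $P$ local spaces and each local bound is uniform in $i$, every sum over $i$ contributes a factor $P$, so that $\sum_i \epsilon_0^2(i) \le P\,(C_1 h^\alpha + C_2 H^{1+\alpha})^2$, which produces the $L^2$ estimate with the factor $\sqrt{PM}\,C_\infty$. For the gradient estimate I would use the hypothesis $\text{diam}(\Omega_i)\ge 1/Q$ to replace $C_G/\text{diam}(\Omega_i)$ by the uniform bound $Q C_G$, giving
\[
\sum_i \left(\frac{C_G}{\text{diam}(\Omega_i)}\right)^2\epsilon_1^2(i) + C_\infty^2\epsilon_0^2(i)
\le P\,(Q^2 C_G^2 + C_\infty^2)\,(C_1 h^\alpha + C_2 H^{1+\alpha})^2,
\]
and taking the square root and multiplying by $\sqrt{2M}$ produces the stated factor $\sqrt{2PM(Q^2C_G^2+C_\infty^2)}$.

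Finally, for the optimal-order consequence I would simply note that the balancing condition $H \le h^{\alpha/(1+\alpha)}$ forces $H^{1+\alpha}\le h^{\alpha}$, so that $C_1 h^\alpha + C_2 H^{1+\alpha} \le (C_1+C_2)\,h^\alpha$, which after redefining constants gives the $\max\{C_1,C_2\}\,h^\alpha$ form. Substituting this into the two estimates above eliminates the $H$-dependence and leaves a clean $O(h^\alpha)$ bound, which is the optimal rate for $C^0$ piecewise linear elements approximating an $H^{1+\alpha}$ solution in the $H^1$-norm.
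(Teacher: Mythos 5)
Your proposal is correct and takes essentially the same route as the paper's proof: you view PPUM as a PUM so that Theorem~\ref{Holst:thm:pum} applies verbatim, you invoke the Xu--Zhou local a~priori estimates to obtain $\epsilon_0(i)=\epsilon_1(i)=C_1 h^{\alpha}+C_2 H^{1+\alpha}$, and the factors $P$ and $Q$ enter through exactly the same bookkeeping, followed by the same balancing $H\le h^{\alpha/(1+\alpha)}$. The only difference is presentational: you bound the global $L^2$ term by an explicit Aubin--Nitsche argument and treat $\epsilon_0(i)$ and $\epsilon_1(i)$ separately, whereas the paper extracts both at once from the single local $H^1$ estimate $\|u-u_i^g\|_{H^1(\Omega_i\cap\Omega)}\le C\left(\inf_{v_i^0\in V_i^0}\|u-v_i^0\|_{H^1(\Omega_i^0\cap\Omega)}+\|u-u_i^g\|_{L^2(\Omega)}\right)$; this makes your write-up slightly more explicit but is not a different proof.
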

\begin{proof}
Viewing PPUM as a PUM gives access to the {\em a~priori}
estimates in Theorem~\ref{Holst:thm:pum};
these require local estimates of the form:
\begin{eqnarray*}
\|u-u_i\|_{L^2(\Omega \cap \Omega_i)} 
= \|u-u_i^g\|_{L^2(\Omega \cap \Omega_i)} &\le& \epsilon_0(i), \\
\|\nabla(u-u_i)\|_{L^2(\Omega \cap \Omega_i)} 
= \|\nabla(u-u_i^g)\|_{L^2(\Omega \cap \Omega_i)} &\le& \epsilon_1(i).
\end{eqnarray*}
Such local {\em a~priori} estimates are available
for problems of the
form~(\ref{Holst:eqn:pde_strong})~\cite{Nitsche:1974:IER,Xu:2000:LPF}.
They can be shown to take the following form:
$$
\| u - u_i^g \|_{H^1(\Omega_i \cap \Omega)} \le C \left(
    \inf_{v_i^0 \in V_i^0} \|u - v_i^0\|_{H^1(\Omega_i^0 \cap \Omega)}
  + \| u - u_i^g \|_{L^2(\Omega)} \right)
$$
where
$$
   V_i^0 \subset C^k(\Omega_i^0 \cap \Omega) \subset H^1(\Omega_i \cap \Omega),
$$
and where
$$
   \Omega_i \subset \subset \Omega_i^0,
   \ \ \ \ \ \Omega_{ij} = \Omega_i^0 \bigcap \Omega_i^0,
   \ \ \ \ \ |\Omega_{ij} | \approx | \Omega_i | \approx | \Omega_j |.
$$
Since we assume $u \in H^{1+\alpha}(\Omega)$, $\alpha > 0$,
and since quasi-uniform meshes of sizes $h$ and $H > h$ are used
for $\Omega_i^0$ and $\Omega \backslash \Omega_i^0$ respectively, we have:
\begin{eqnarray*}
\| u - u_i^g \|_{H^1(\Omega_i \cap \Omega)}
&=& \left(
\| u - u_i^g \|_{L^2(\Omega_i \cap \Omega)}^2
+ \| \nabla( u - u_i^g) \|_{L^2(\Omega_i \cap \Omega)}^2
\right)^{1/2}
\nonumber \\
&\le& C_1 h^{\alpha} + C_2 H^{1 + \alpha}.
\end{eqnarray*}
I.e., in this setting we can use
$\epsilon_0(i) = \epsilon_1(i) = C_1 h^{\alpha} + C_2 H^{1 + \alpha}$.
The {\em a~priori} PUM estimates in Theorem~\ref{Holst:thm:pum} then become:
\begin{eqnarray*}
\|u-u_{pp}\|_{L^2(\Omega)}
   &\le& \sqrt{M} C_{\infty} \left(
           \sum_i (C_1 h^{\alpha} + C_2 H^{1+\alpha})^2 \right)^{1/2},
\\
\|\nabla(u-u_{pp})\|_{L^2(\Omega)}
   &\le& \sqrt{2M} 
\nonumber
\end{eqnarray*}
$$
\cdot
\left( \left[
     \sum_i \left( \frac{C_G}{\text{diam}(\Omega_i)} \right)^2
     + C_{\infty}^2 
      \right]
         (C_1 h^{\alpha} + C_2 H^{1+\alpha})^2
         \right)^{1/2}.
$$
If $P = $ number of local spaces $V_i$,
and if $\text{diam}(\Omega_i) \ge 1/Q > 0 ~~\forall i$, this is simply:
\begin{eqnarray*}
\|u-u_{pp}\|_{L^2(\Omega)}
   &\le& \sqrt{PM} C_{\infty} 
         \left( C_1 h^{\alpha} + C_2 H^{1+\alpha} \right), \\
\|\nabla(u-u_{pp})\|_{L^2(\Omega)}
   &\le& \sqrt{2PM (Q^2 C_G^2 + C_{\infty}^2)}
         \left( C_1 h^{\alpha} + C_2 H^{1+\alpha} \right).
\end{eqnarray*}
If $H \le h^{\alpha/(1+\alpha)}$ then $u_{pp}$ from PPUM is
asymptotically as good as a global Galerkin solution when
the error is measured in the $H^1$-norm.
\end{proof}
Local versions of Theorem~\ref{Holst:thm:ppum} appear in~\cite{Xu:2000:LPF} for
a variety of related parallel algorithms.
Note that the local estimates in~\cite{Xu:2000:LPF} hold more generally for
nonlinear versions of~(\ref{Holst:eqn:pde_strong}), so that
Theorem~\ref{Holst:thm:ppum} can be shown to hold in a more general setting.
Finally, it should be noted that improving the estimates in the
$L^2$-norm is not generally possible; the required local estimates simply
do not hold.  Improving the solution quality in the $L^2$-norm generally
requires more global information.
However, for some applications one is more interested in a quality
approximation of the {\em gradient} or the {\em energy} of the solution rather
than to the solution itself.

\section[Duality-based PPUM]
        {Duality-based PPUM}
\label{Holst:sec:duality}

We first briefly review a standard approach to the use of duality
methods in error estimation.
(cf.~\cite{Estep:2002:ASP,Giles:2002:AMP}
for a more complete discussion).
Consider the weak formulation~(\ref{Holst:eqn:pde_weak}) involving
a possibly nonlinear differential operator
$F: H^1_0(\Omega) \mapsto H^{-1}(\Omega)$,
and a Galerkin approximation $u_{ap}$
satisfying~(\ref{Holst:eqn:pde_galerkin}).
If $F \in C^1$, the generalized Taylor expansion exists:
$$
F(u+h) = F(u) + \left\{ \int_0^1 DF(u+\xi h) d\xi \right\} h.
$$
With $e=u-u_{ap}$, and with $F(u)=0$,
leads to the linearized error equation:
$$
F(u_{ap}) = F(u - e) 
          = F(u) + \mathcal{A}(u_{ap}-u) = -\mathcal{A}e,
$$
where the linearization operator $\mathcal{A}$ is defined as:
$$
\mathcal{A} = \int_0^1 DF(u+\xi h) d\xi.
$$
Assume now we are interested in a linear functional of the error
$l(e) = \langle e,\psi \rangle$, where $\psi$ is the (assumed accessible)
Riesz-representer of $l(\cdot)$.
If $\phi \in H^1_0(\Omega)$ is the solution to the linearized dual problem:
$$
\mathcal{A}^T \phi = \psi,
$$
then we can exploit the linearization operator $\mathcal{A}$
and its adjoint $\mathcal{A}^T$ to give the following identity:
\begin{equation}
   \label{Holst:eqn:residual}
\langle e,\psi \rangle = \langle e,\mathcal{A}^T \phi \rangle
   = \langle \mathcal{A}e,\phi \rangle
   = - \langle F(u_{ap}),\phi \rangle.
\end{equation}
If we can compute an approximation 
$\phi_{ap} \in \mathcal{V} \subset H^1_0(\Omega)$ to the linearized dual
problem then we can estimate the error by combining this with the
(computable) residual $F(u_{ap})$:
$$
|\langle e,\psi \rangle | 
= | \langle F(u_{ap}),\phi \rangle |
= | \langle F(u_{ap}),\phi - \phi_{ap} \rangle |,
$$
where the last term is a result of~(\ref{Holst:eqn:pde_galerkin}). 
The term on the right is then estimated locally using assumptions
on the quality of the approximation $\phi_{ap}$ and by various
numerical techniques; cf.~\cite{Estep:2002:ASP}.
The local estimates are then used to drive adaptive mesh refinement.
This type of duality-based error estimation has been shown to be
useful for certain applications in engineering and other areas where
accuracy in a linear functional of the solution is important, but accuracy
in the solution itself is not (cf.~\cite{Giles:2002:AMP}).

Consider now this type of error estimation in the context of
domain decomposition and PPUM.
Given a linear or nonlinear weak formulation as in~(\ref{Holst:eqn:pde_weak}),
we are interested in the solution $u$ as well as in the error in
PPUM approximations $u_{pp}$ as defined
in~(\ref{Holst:eqn:ppum_upp})--(\ref{Holst:eqn:ppum_uig}).
If a global linear functional $l(u-u_{pp})$ of the error $u-u_{pp}$
is of interest rather
than the error itself, then we can formulate a variant of the PPUM 
parallel algorithm which has in some sense a more general approximation
theory framework than that of the previous section.
There are no assumptions beyond solvability of the local problems and
of the global dual problems with localized data, and perhaps some
minimal smoothness assumptions on the dual solution.
In particular, the theory does not require local {\em a~priori}
error estimates;
the local {\em a~priori} estimates are replaced by
solving global dual problem problems
with localized data, and then incorporating the dual solutions explictly
into the {\em a posteriori} error estimate.
As a result, the large overlap assumption needed for the local estimates
in the proof of Theorem~\ref{Holst:thm:ppum} is unnecessary.
Similarly, the large overlap assumption needed to achieve the bounded
gradient property~(\ref{Holst:eqn:pum_pou5}) is no longer needed.

The following result gives a global bound on a linear functional of the
error based on satisfying local computable {\em a posteriori} bounds
involving localized dual problems.
\begin{theorem}
    \label{Holst:thm:ppum_dual}
Let $\{ \phi_i \}$ be a partition of unity subordinate to a cover
$\{ \Omega_i \}$.
If $\psi$ is the Riesz-representer for a linear functional
$l(u)$, then the functional of the error in the PPUM approximation
$u_{pp}$ from~(\ref{Holst:eqn:ppum_upp}) satisfies
$$
l(u-u_{pp}) 
    = - \sum_{k=1}^p \langle F(u_i^g), \omega_i \rangle,
$$
where $u_i^g$ are the solutions to the subspace problems
in~(\ref{Holst:eqn:ppum_uig}), and
where the $\omega_i$ are the solutions to the following
{\em global dual problems} with localized data:
\begin{equation}
   \label{Holst:eqn:dualsol}
\text{Find}~ \omega_i \in H^1_0(\Omega)~\text{such~that}~
 (A^T \omega_i, v)_{L^2(\Omega)}
                = (\phi_i \psi, v)_{L^2(\Omega)},
    ~~~\forall v \in H^1_0(\Omega).
\end{equation}
Moreover, if the local residual $F(u_i^g)$, weighted by the
localized dual solution $\omega_i$, satisfies the following error
tolerance in each subspace:
\begin{equation}
   \label{Holst:eqn:tol_local}
    \left| \langle F(u_i^g), \omega_i \rangle \right|
   < \frac{\epsilon}{p}, ~~~~ i=1,\ldots,p
\end{equation}
then the linear functional of the global error $u-u_{pp}$ satisfies
\begin{equation}
   \label{Holst:eqn:tol_global}
\left|l(u-u_{pp})\right| < \epsilon.
\end{equation}
\end{theorem}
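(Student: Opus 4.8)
The plan is to reduce the error functional to a sum of \emph{localized} dual-weighted residuals, mirroring the single-functional identity~\eqref{Holst:eqn:residual}, and then to use the partition-of-unity structure to trade the global residual $F(u_{pp})$ for the local residuals $F(u_i^g)$ at no cost. Set $e = u - u_{pp}$. Because $\sum_i \phi_i \equiv 1$ by~\eqref{Holst:eqn:pum_pou1}, the Riesz representer splits as $\psi = \sum_i \phi_i \psi$, so that
$$
l(u - u_{pp}) = \langle e, \psi \rangle = \sum_{i=1}^p (\phi_i \psi, e)_{L^2(\Omega)} .
$$
The first step is thus to analyze each term $(\phi_i \psi, e)_{L^2(\Omega)}$ separately, using the dual solution $\omega_i$ carrying the localized data $\phi_i \psi$.

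For fixed $i$ I would repeat the computation behind~\eqref{Holst:eqn:residual} with $\phi$ replaced by $\omega_i$ and $\psi$ by $\phi_i \psi$. Since $\omega_i$ solves~\eqref{Holst:eqn:dualsol}, i.e. $\mathcal{A}^T \omega_i = \phi_i \psi$ weakly (identifying the operator $A$ in~\eqref{Holst:eqn:dualsol} with the linearization $\mathcal{A}$ of~\eqref{Holst:eqn:residual}), and since the generalized Taylor expansion gives $F(u_{pp}) = F(u) + \mathcal{A}(u_{pp} - u)$ with $F(u) = 0$, one obtains
$$
(\phi_i \psi, e)_{L^2(\Omega)} = \langle e, \mathcal{A}^T \omega_i \rangle = \langle \mathcal{A} e, \omega_i \rangle = - \langle F(u_{pp}), \omega_i \rangle .
$$
Summing over $i$ yields the intermediate representation $l(u - u_{pp}) = - \sum_{i=1}^p \langle F(u_{pp}), \omega_i \rangle$, which is already a localized dual-weighted residual estimate, but is still expressed through the \emph{global} approximation $u_{pp}$.

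The decisive step, which I expect to be the main obstacle, is to replace $F(u_{pp})$ by the \emph{local} residuals $F(u_i^g)$ exactly. I would examine the difference term by term: since $u_{pp} - u_i^g \in H^1_0(\Omega)$ and $F(u_{pp}) - F(u_i^g) = \mathcal{A}(u_{pp} - u_i^g)$, testing~\eqref{Holst:eqn:dualsol} against $v = u_{pp} - u_i^g$ converts each difference into $(\phi_i \psi, u_{pp} - u_i^g)_{L^2(\Omega)}$. The result then follows from a telescoping cancellation driven purely by the partition of unity: using $u_{pp} = \sum_j \phi_j u_j^g$ and $\sum_i \phi_i \equiv 1$, both $\sum_i (\phi_i \psi, u_{pp})_{L^2(\Omega)}$ and $\sum_i (\phi_i \psi, u_i^g)_{L^2(\Omega)}$ collapse to $(\psi, u_{pp})_{L^2(\Omega)}$, so the total correction vanishes and $\sum_i \langle F(u_{pp}), \omega_i \rangle = \sum_i \langle F(u_i^g), \omega_i \rangle$, which is the claimed identity. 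The points needing care are verifying that $u_{pp} - u_i^g$ respects the homogeneous boundary data so that it is an admissible test function in~\eqref{Holst:eqn:dualsol}, and, in the genuinely nonlinear case, that the linearization realizing $F(u_{pp}) - F(u_i^g) = \mathcal{A}(u_{pp} - u_i^g)$ coincides with the one defining $\omega_i$; for the linear model problem~\eqref{Holst:eqn:pde_strong} both are automatic, so the identity is exact. Note that the local Galerkin property~\eqref{Holst:eqn:ppum_uig} is not actually needed for this representation.

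Finally, the tolerance bound~\eqref{Holst:eqn:tol_global} is immediate from the identity and the triangle inequality: if each localized weighted residual obeys~\eqref{Holst:eqn:tol_local}, then
$$
|l(u - u_{pp})| \le \sum_{i=1}^p \left| \langle F(u_i^g), \omega_i \rangle \right| < p \cdot \frac{\epsilon}{p} = \epsilon .
$$
This concluding step is routine; the entire content lies in the exact localized representation established above.
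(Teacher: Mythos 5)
Your proof is correct for the linear model problem, but it takes a different route from the paper's. The paper applies the partition of unity to the \emph{error}: it writes $u-u_{pp}=\sum_i \phi_i(u-u_i^g)$, moves $\phi_i$ onto $\psi$, tests the $i$-th dual problem~(\ref{Holst:eqn:dualsol}) with $u-u_i^g$, and uses $\mathcal{A}(u-u_i^g)=-F(u_i^g)$ to land directly on the local residual; the global residual $F(u_{pp})$ never appears. You instead apply the partition of unity to the \emph{data}, $\psi=\sum_i\phi_i\psi$, keep the global error $e$, obtain the intermediate identity $l(u-u_{pp})=-\sum_i\langle F(u_{pp}),\omega_i\rangle$, and then convert $F(u_{pp})$ into $F(u_i^g)$ by testing~(\ref{Holst:eqn:dualsol}) with $u_{pp}-u_i^g$ and invoking the telescoping cancellation $\sum_i(\phi_i\psi,u_{pp}-u_i^g)_{L^2(\Omega)}=(\psi,u_{pp})-(\psi,u_{pp})=0$, which is valid since $\sum_i\phi_i\equiv 1$ and $u_{pp}=\sum_i\phi_i u_i^g$. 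Your detour buys a nontrivial side fact --- the dual-weighted global residuals and dual-weighted local residuals have equal sums --- and your remark that the Galerkin property~(\ref{Holst:eqn:ppum_uig}) is never used is accurate (it holds for the paper's proof as well). The trade-off is in the nonlinear case: the paper's chain needs, per term, only the single linearization joining $u$ and $u_i^g$, consistent with the operator in the $i$-th dual problem, whereas your argument invokes two distinct generalized-Taylor linearizations (one joining $u$ to $u_{pp}$, one joining $u_i^g$ to $u_{pp}$) and requires both to coincide with the operator $A^T$ defining $\omega_i$ --- a condition that holds automatically only when $F$ is affine, as you correctly flag. Since the theorem is stated with a single fixed $A^T$ and the model problem~(\ref{Holst:eqn:pde_strong}) is linear, this is a stylistic rather than substantive deficiency, but the paper's ordering of the two moves is the one that generalizes.
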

\begin{proof}
With $l(u-u_{pp}) = (u-u_{pp}, \psi )_{L^2(\Omega)}$,
the localized representation comes from:
$$
   (u - u_{pp},\psi)_{L^2(\Omega)}
      = (\sum_{k=1}^p \phi_i u 
             - \sum_{i=1}^p \phi_i u_i^g,\psi)_{L^2(\Omega)}
      = \sum_{k=1}^p (\phi_i (u - u_i^g), \psi)_{L^2(\Omega \cap \Omega_i)}.
$$
From~(\ref{Holst:eqn:residual}) and~(\ref{Holst:eqn:dualsol}),
each term in the sum can be written
in terms of the local residual $F(u_i^g)$ as follows:
\begin{eqnarray*}
(\phi_i(u-u_i^g), \psi)_{L^2(\Omega \cap \Omega_i)}
  & = & (u-u_i^g, \phi_i \psi)_{L^2(\Omega \cap \Omega_i)} \\
  & = & (u-u_i^g, \mathcal{A}^T \omega_i)_{L^2(\Omega)} \\
  & = & (\mathcal{A}(u-u_i^g), \omega_i)_{L^2(\Omega)} \\
  & = & - ( F(u_i^g), \omega_i )_{L^2(\Omega)}.
\end{eqnarray*}
This gives then
$$
|(u-u_{pp},\psi)_{L^2(\Omega)}|
      \le \sum_{k=1}^p | \langle F(u_i^g), \psi \rangle|
      < \sum_{k=1}^p \frac{\epsilon}{p}
      = \epsilon.
$$
\end{proof}
We will make a few additional remarks about the parallel adaptive
algorithm which arises naturally from Theorem~\ref{Holst:thm:ppum_dual}.
Unlike the case in Theorem~\ref{Holst:thm:ppum}, the constants
$C_{\infty}$ and $C_G$ in~(\ref{Holst:eqn:pum_pou4})
and~(\ref{Holst:eqn:pum_pou5}) do not impact the error estimate
in Theorem~\ref{Holst:thm:ppum_dual}, removing the need for the
{\em a~priori} large overlap assumptions.
Moreover, local {\em a~priori} estimates are not required either,
removing a second separate large overlap assumption that must be made
to prove results such as Theorem~\ref{Holst:thm:ppum}.
Using large overlap of {\em a~priori} unknown size to satisfy the requirements
for Theorem~\ref{Holst:thm:ppum} seems unrealistic for implementations.
On the other hand, no such {\em a~priori} assumptions are
required to use the result in Theorem~\ref{Holst:thm:ppum_dual} as
the basis for a parallel adaptive algorithm.
One simply solves the local dual problems~(\ref{Holst:eqn:dualsol})
on each processor independently, adapts the mesh on each processor
independently until the computable local error estimate satisfies
the tolerance~(\ref{Holst:eqn:tol_local}), which
then guarantees that the functional of the global error meets
the target in~(\ref{Holst:eqn:tol_global}).

Whether such a duality-based approach will produce an efficient
parallel algorithm is not at all clear; however, it is at least
a mechanism for decomposing the solution to an elliptic problem 
over a number of subdomains.
Note that ellipticity is not used in Theorem~\ref{Holst:thm:ppum_dual},
so that the approach is also likely reasonable for other classes of PDE.
These questions, together with a number of related duality-based
decomposition algorithms are examined in more detail in~\cite{Estep:2002:SDL}.
The analysis in~\cite{Estep:2002:SDL} is based on a different approach
involving estimates of Green function decay rather than through
partition of unity methods.

\section[Implementation in {\sc FEtk} and {\sc MC}]
        {Implementation in {\sc FEtk} and {\sc MC}}
  \label{Holst:sec:mc}

Our implementations are performed using
{\sc FEtk} and {\sc MC} (see~\cite{Holst:2001:ANT}
for a more complete discussion of {\sc MC} and {\sc FEtk}).
{\sc MC} is the adaptive multilevel finite element software kernel
within {\sc FEtk}, a large collection of collaboratively developed
finite element software tools based at UC San Diego (see {\tt www.fetk.org}).
{\sc MC} is written in ANSI C (as is most of {\sc FEtk}),
and is designed to produce highly accurate numerical solutions to
nonlinear covariant elliptic systems of tensor equations
on 2- and 3-manifolds in an optimal or nearly-optimal way.
{\sc MC} employs {\em a posteriori} error estimation,
adaptive simplex subdivision, 
unstructured algebraic multilevel methods, global inexact Newton methods, 
and numerical continuation methods.
Several of the features of {\sc MC} are somewhat unusual,
allowing for the treatment
of very general nonlinear elliptic systems of tensor equations on domains
with the structure of (Riemannian) 2- and 3-manifolds.
Some of these features are:
\begin{itemize}
\item {\em Abstraction of the elliptic system}: The elliptic system 
     is defined only through a nonlinear weak form over the domain manifold,
     along with an associated linearization form, also defined everywhere on
     the domain manifold
     (precisely the forms $\langle F(u),v \rangle$
     and $\langle DF(u)w,v \rangle$ in the discussions above).
\item {\em Abstraction of the domain manifold}: The domain manifold is
     specified by giving a polyhedral representation of the topology, along
     with an abstract set of coordinate labels of the user's interpretation,
     possibly consisting of multiple charts.
     {\sc MC} works only with the topology of the domain, the connectivity
     of the polyhedral representation.
     The geometry of the domain manifold is provided only through the form
     definitions, which contain the manifold metric information.
\item {\em Dimension independence}: Exactly the same code paths in {\sc MC}
     are taken for both two- and three-dimensional problems (as well as for
     higher-dimensional problems).
     To achieve this dimension independence, {\sc MC} employs the simplex
     as its fundamental geometrical object for defining finite element bases.
\end{itemize}
As a consequence of the abstract weak form approach to defining the problem,
the complete definition of a complex nonlinear tensor system such as large
deformation nonlinear elasticity requires writing only a few hundred lines
of C to define the two weak forms.
Changing to a different tensor system (e.g. the example later in the
paper involving the constraints in the Einstein equations)
involves providing only a different definition of the forms and a different
domain description.

A datastructure referred to as the {\em ringed-vertex}
(cf.~\cite{Holst:2001:ANT})
is used to represent meshes of $d$-simplices of arbitrary topology.
This datastructure is illustrated in Figure~\ref{Holst:fig:river}.
\begin{figure}[ht]
\begin{center}
\mbox{\includegraphics[width=0.38\textwidth]{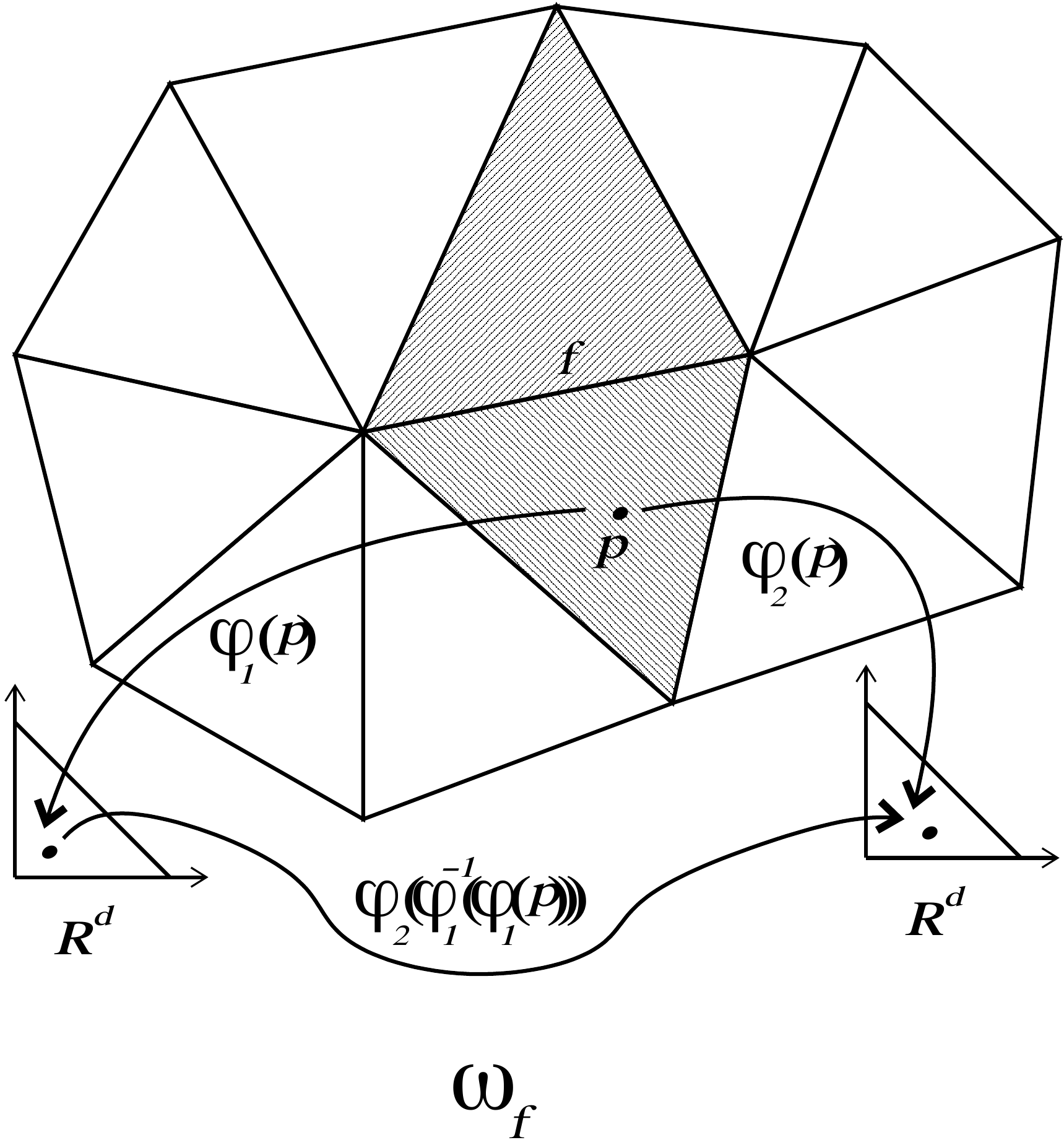}}
\mbox{ \hspace*{0.7cm} }
\mbox{\includegraphics[width=0.38\textwidth]{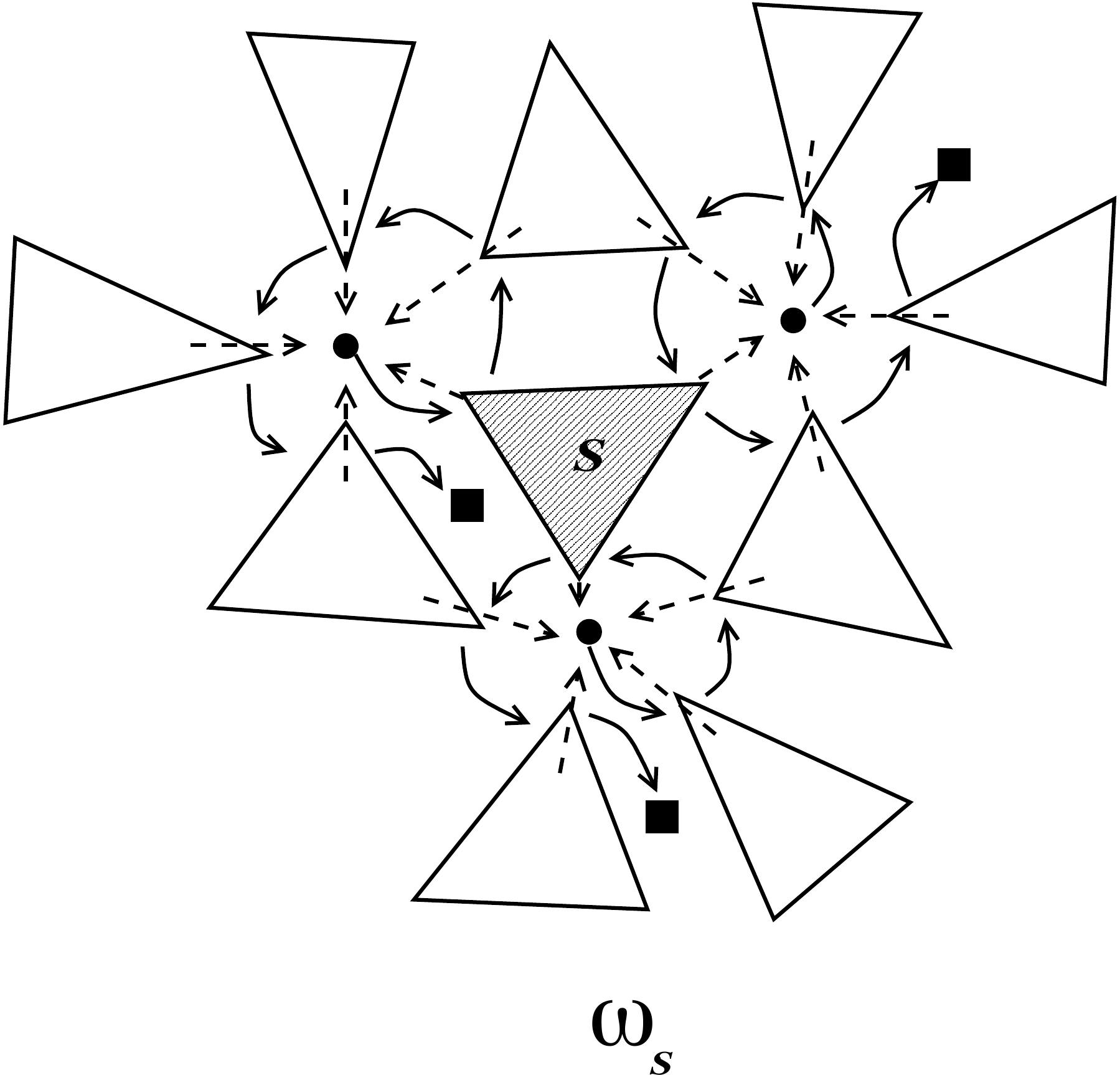}}
\end{center}
\caption{Polyhedral manifold representation.
The figure on the left shows two overlapping polyhedral (vertex) charts
consisting of the two rings of simplices around two vertices sharing an edge.
The region consisting of the two darkened triangles around the face $f$ is
denoted $\omega_f$, and represents the overlap of the two vertex charts.
Polyhedral manifold topology is represented by MC using the
{\em ringed-vertex} (or {\em RIVER}) datastructure.
The datastructure is illustrated for a given simplex $s$ in the figure on the
right; the topology primitives are vertices and $d$-simplices.
The collection of the simplices which meet the simplex $s$ at its vertices
(which then includes those simplices that share faces as well) is denoted
as $\omega_s$.
}
\label{Holst:fig:river}
\end{figure}
The ringed-vertex datastructure
is similar to the winged-edge, quad-edge, and edge-facet
datastructures commonly used in the computational geometry community for
representing 2-manifolds~\cite{Mucke:1993:SIT},
but it can be used more generally
to represent arbitrary $d$-manifolds, $d \ge 2$.
It maintains a mesh of $d$-simplices with near minimal storage,
yet for shape-regular (non-degenerate) meshes, it provides $O(1)$-time access
to all information necessary for refinement, un-refinement, and
Petrov-Galerkin discretization of a differential operator.
The ringed-vertex datastructure also allows for dimension independent
implementations of mesh refinement and mesh manipulation, with one
implementation (the same code path) covering arbitrary dimension $d$.
An interesting feature of this datastructure is that the C structures used
for vertices, simplices, and edges are all of fixed size, so that a fast
array-based implementation is possible, as opposed to a less-efficient
list-based approach commonly taken for finite element implementations
on unstructured meshes.
A detailed description of the ringed-vertex datastructure, along with a
complexity analysis of various traversal algorithms, can be found
in~\cite{Holst:2001:ANT}.

Our modifications to {\sc MC} to implement PPUM are minimal,
and are described in detail in~\cite{Bank:2000:NPP}.
These modifications involve primarily forcing the error indicator
to ignore regions outside the subdomain assigned to the particular
processor.
The implementation does not form an explicit partition of unity or a
final global solution; the solution must
be evaluated locally by locating the disjoint subdomain containing the
physical region of interest, and then by using the solution produced
by the processor assigned to that particular subdomain.
Note that forming a global conforming mesh as needed to build a global
partition of unity is possible even in a very loosely coupled parallel
environment, due to the deterministic nature of the bisection-based
algorithms we use for simplex subdivision (see~\cite{Holst:2001:ANT}).
For example, if bisection by longest edge (supplemented with tie-breaking)
is used to subdivide any simplex that is refined on any processor, then
the progeny types, shapes, and configurations can be predicted in a completely
determinstic way.
If two simplices share faces across a subdomain boundary, then they are
either compatible (their triangular faces exactly match), or one of the
simplices has been bisected more times than its neighbor.
By exchanging only the generation numbers between subdomains, a global
conforming mesh can be reached using only additional bisection.

\section[Example 1: The Einstein Constraints]
        {Example 1: The Einstein Constraints in Gravitation}
  \label{Holst:sec:example1}

The evolution of the gravitational field was conjectured by Einstein
to be governed by twelve coupled first-order hyperbolic equations for the
metric of space-time and its time derivative, where the evolution is
constrained for all time by a coupled four-component elliptic system.
The theory basically gives what is viewed as the correct interpretation
of the graviational field as a bending of space and time around matter
and energy, as opposed to the classical Newtonian view of the
gravitational field
as analogous to the electrostatic field; cf. Figure~\ref{Holst:fig:grpic}.
\begin{figure}[ht]
\centerline{
\hbox{\includegraphics[width=0.5\textwidth]{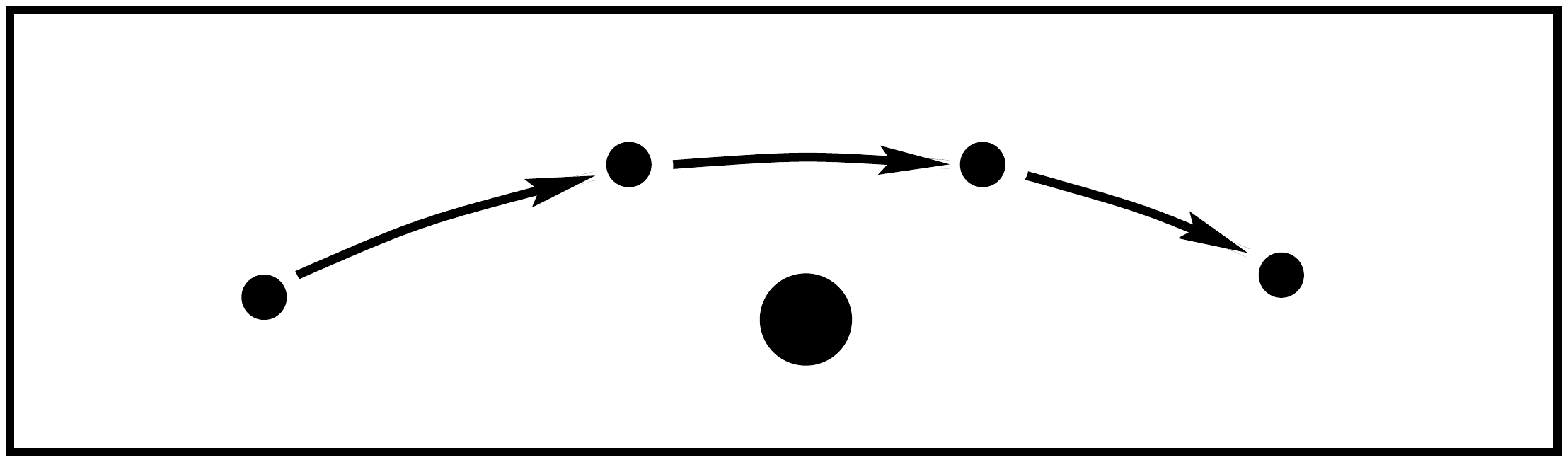}}
\hbox{\includegraphics[width=0.5\textwidth]{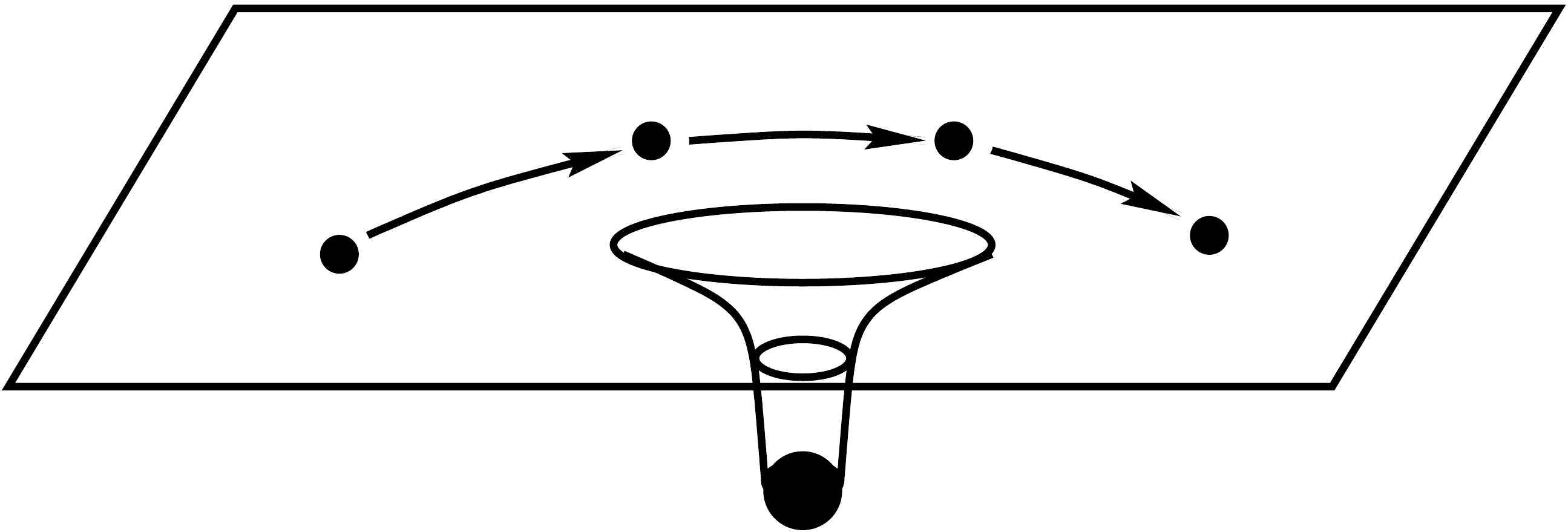}}
}
\caption{Newtonian versus general relativistic explanations of
gravitation: the small mass simply follows a geodesic on the curved
surface created by the large mass.}
\label{Holst:fig:grpic}
\end{figure}
The four-component elliptic constraint system consists of a nonlinear scalar
{\em Hamiltonian constraint}, and a linear 3-vector {\em momentum}
constraint.
The evolution and constraint equations, similar in some respects to
Maxwell's equations, are collectively referred to as
the {\em Einstein equations}.
Solving the constraint equations numerically, separately or together
with the evolution equations, is currently of great interest
to the physics community due to the recent construction of a new
generation of gravitational wave detectors
(cf.~\cite{Holst:2002:SRN,Holst:2002:AFE} for more detailed
discussions of this application).

Allowing for both Dirichlet and Robin boundary conditions on a
3-manifold $\mathcal{M}$ with boundary
$\partial \mathcal{M} = \partial_0 \mathcal{M} \cup \partial_1 \mathcal{M}$,
as typically the case in black hole and neutron star models
(cf.~\cite{Holst:2002:SRN,Holst:2002:AFE}), the strong form of the constraints
can be written as:
\begin{eqnarray}
\hat{\Delta}\phi & = & \frac{1}{8} \hat{R} \phi
     + \frac{1}{12} ({\rm tr} K)^2 \phi^5
\label{Holst:eqn:ham_str1} \\
& &
     - \frac{1}{8} ({{}^{*}\!\!\hat{A}}_{ab} + (\hat{L}W)_{ab})^2 \phi^{-7}
     - 2 \pi \hat{\rho} \phi^{-3} ~~\text{in}~\mathcal{M},
\nonumber \\
\hat{n}_a \hat{D}^a \phi + c \phi
     & = & z ~\text{on}~\partial_1 \mathcal{M}, \label{Holst:eqn:ham_str2} \\
\phi & = & f ~\text{on}~\partial_0 \mathcal{M}, \label{Holst:eqn:ham_str3} \\
  \hat{D}_b(\hat{L}W)^{ab} & = & \frac{2}{3} \phi^6 \hat{D}^a {\rm tr}K 
                             + 8 \pi \hat{j}^a
      ~~\text{in}~\mathcal{M}, \label{Holst:eqn:mom_str1} \\
(\hat{L}W)^{ab} \hat{n}_b + C^a_{~b} W^b
    & = & Z^a ~\text{on}~\partial_1 \mathcal{M}, \label{Holst:eqn:mom_str2} \\
  W^a & = & F^a ~~\text{on}~\partial_0 \mathcal{M},  \label{Holst:eqn:mom_str3}
\end{eqnarray}
where the following standard notation has been employed:
\begin{eqnarray*}
\hat{\Delta}\phi & = & \hat{D}_a \hat{D}^a \phi, \\
(\hat{L}W)^{ab} & = & \hat{D}^a W^b + \hat{D}^b W^a 
                - \frac{2}{3} \hat{\gamma}^{ab} \hat{D}_c W^c, \\
{\rm tr} K & = & \gamma^{ab}K_{ab}, \\
(C_{ab})^2 & = & C^{ab}C_{ab}.
\end{eqnarray*}
In the tensor expressions above, there is an implicit sum on all repeated
indices in products, and the covariant derivative with respect to the
fixed background metric $\hat{\gamma}_{ab}$ is denoted as $\hat{D}_{a}$
The remaining symbols in the equations
($\hat{R}$, $K$, ${{}^{*}\!\!\hat{A}}_{ab}$,
 $\hat{\rho}$, $\hat{j}^a$, $z$, $Z^a$, $f$, $F^a$, $c$, and $C^a_b$)
represent various physical parameters, and are described in detail
in~\cite{Holst:2002:SRN,Holst:2002:AFE} and the referenences therein.
Stating the system as set of tensor equations comes from the need
to work with domains which generally have the structure of 3-manifolds rather
than single open sets in $\mathbb{R}^3$ (cf.~\cite{Holst:2001:ANT}).

Equations (\ref{Holst:eqn:ham_str1})--(\ref{Holst:eqn:mom_str3})
are known to be well-posed only for certain problem data and
manifold
topologies~\cite{Murchadha:1973:EUS,Isenberg:1996:NMC}.
Note that if multiple solutions in the form of folds or
bifurcations are present in solutions of
(\ref{Holst:eqn:ham_str1})--(\ref{Holst:eqn:mom_str3}) then 
path-following numerical methods will be required for numerical
solution~\cite{Keller:1987:NMB}.
For our purposes here, we select the problem data and manifold
topology such that the assumptions for the two general well-posedness
results in~\cite{Holst:2002:SRN} hold for
(\ref{Holst:eqn:ham_str1})--(\ref{Holst:eqn:mom_str3}). 
The assumptions required for the two results in~\cite{Holst:2002:SRN}
are quite weak,
and are, for the most part, minimal assumptions beyond those required to give
a well-defined weak formulation in $L^p$-based Sobolev spaces.

In~\cite{Holst:2001:ANT}, two quasi-optimal {\em a~priori} error estimates
are established for Galerkin approximations to the solutions to
(\ref{Holst:eqn:ham_str1})--(\ref{Holst:eqn:mom_str3}). 
These take the form (see Theorems~4.3 and~4.4 in~\cite{Holst:2001:ANT}):
\begin{eqnarray}
\|u-u_h\|_{H^1(\mathcal{M})}
   & \le & C \inf_{v \in V_h} \|u-v\|_{H^1(\mathcal{M})}
\label{Holst:eqn:schatz} \\
\|u-u_h\|_{L^2(\mathcal{M})}
   & \le & C a_h \inf_{v \in V_h} \|u-v\|_{H^1(\mathcal{M})},
\end{eqnarray}
where $V_h \subset H^1(\mathcal{M})$ is e.g. a finite element space.
In the case of the momentum constraint, there is a restriction on the
size of the elements in the underlying finite element mesh for the above
results to hold, characterized above by the parameter $a_h$.
This restriction is due to the fact that the result is established through
of the G{\aa}rding inequality result due to Schatz~\cite{Schatz:1974:OCR}.
In the case of the Hamiltonian constraint, there are no restrictions
on the approximation spaces.

To use {\sc MC} to calculate the initial bending of space and time around a
single massive black hole by solving the above
constraint equations, we place a spherical object of unit radius in space,
and infinite space is truncated with an enclosing sphere of radius 100.
(This outer boundary may be moved further from the object to improve the
accuracy of boundary condition approximations.)
Reasonable choices for the remaining functions and parameters appearing
in the equations are used below to completely specify the problem for use
as an illustrative numerical example.
(More careful examination of the various functions and parameters 
appear in~\cite{Holst:2002:SRN}, and a number of detailed
experiments with more physically meaningful data appear
in~\cite{Holst:2002:AFE}.)

We then generate an initial (coarse) mesh of tetrahedra inside the enclosing
sphere, exterior to the spherical object within the enclosing sphere.
The mesh is generated by adaptively bisecting an initial mesh consisting
of an icosahedron volume filled with tetrahedra.
The bisection procedure simply bisects any tetrahedron which touches the
surface of the small spherical object.
When a reasonable approximation to the surface of the sphere is obtained,
the tetrahedra completely inside the small spherical object are removed,
and the points forming the surface of the small spherical object are
projected to the spherical surface exactly.
This projection involves solving a linear elasticity problem,
together with the use of a shape-optimization-based smoothing procedure.
The smoothing procedure locally optimizes the shape measure function
described in~\cite{Holst:2001:ANT} in an iterative fashion.
A much improved binary black hole mesh generator has been developed by
D.~Bernstein; the new mesh generator is described in~\cite{Holst:2002:AFE}
along with a number of more detailed examples using MC.

The initial coarse mesh is shown in Figure~\ref{Holst:fig:gr1}, generated using
the procedure described above, has approximately 30,000 tetrahedral elements
and 5,000 vertices.
To solve the problem on a 4-processor computing cluster using
a PPUM-like algorithm, we begin by partitioning the domain into
four subdomains (shown in Figure~\ref{Holst:fig:gr2})
with approximately equal error
using the recursive spectral bisection algorithm described
in~\cite{Bank:2000:NPP}.
The four subdomain problems are then solved independently by {\sc MC}, starting
from the complete coarse mesh and coarse mesh solution.
The mesh is adaptively refined in each subdomain until
a mesh with roughly 50000 vertices is obtained
(yielding subdomains with about 250000 simplices each).

The refinement performed by {\sc MC} is confined
primarily to the given region as driven by the weighted residual
error indicator described in~\cite{Holst:2001:ANT},
with some refinement into adjacent regions
due to the closure algorithm which maintains conformity and shape regularity. 
The four problems are solved completely independently by the
sequential adaptive software package {\sc MC}.
One component of the solution (the conformal factor $\phi$) of the
elliptic system is depicted in Figures~\ref{Holst:fig:gr3}
(the subdomain 0 and subdomain 1 solutions).

A number of more detailed examples involving the contraints,
using more physically meaningful data, appear in~\cite{Holst:2002:AFE}.
Application of PPUM to massively parallel simulations of microtubules
and other extremely large and complex biological structures can be
found in~\cite{Baker:2001:ENAM,Baker:2001:AMF}.
The results in~\cite{Baker:2001:ENAM,Baker:2001:AMF} demonstrate both good
parallel scaling of PPUM as well as quality approximation of
the gradient of electrostatic potentials (solutions to the 
Poisson-Boltzmann equation; cf.~\cite{Holst:2000:AMF}).

\begin{figure}[ht]
\centerline{
\mbox{\includegraphics[width=0.5\textwidth]{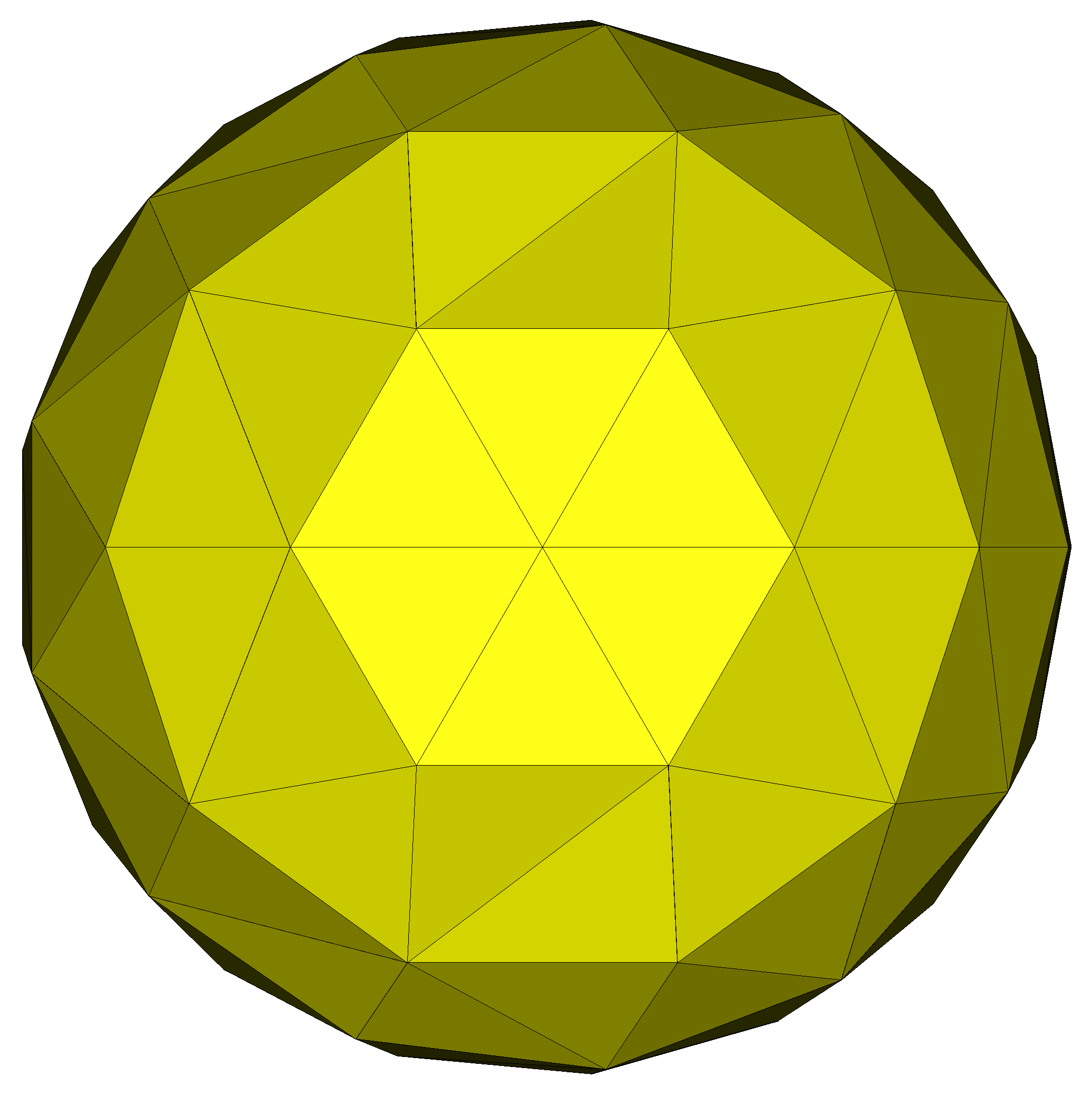}}
\mbox{\includegraphics[width=0.5\textwidth]{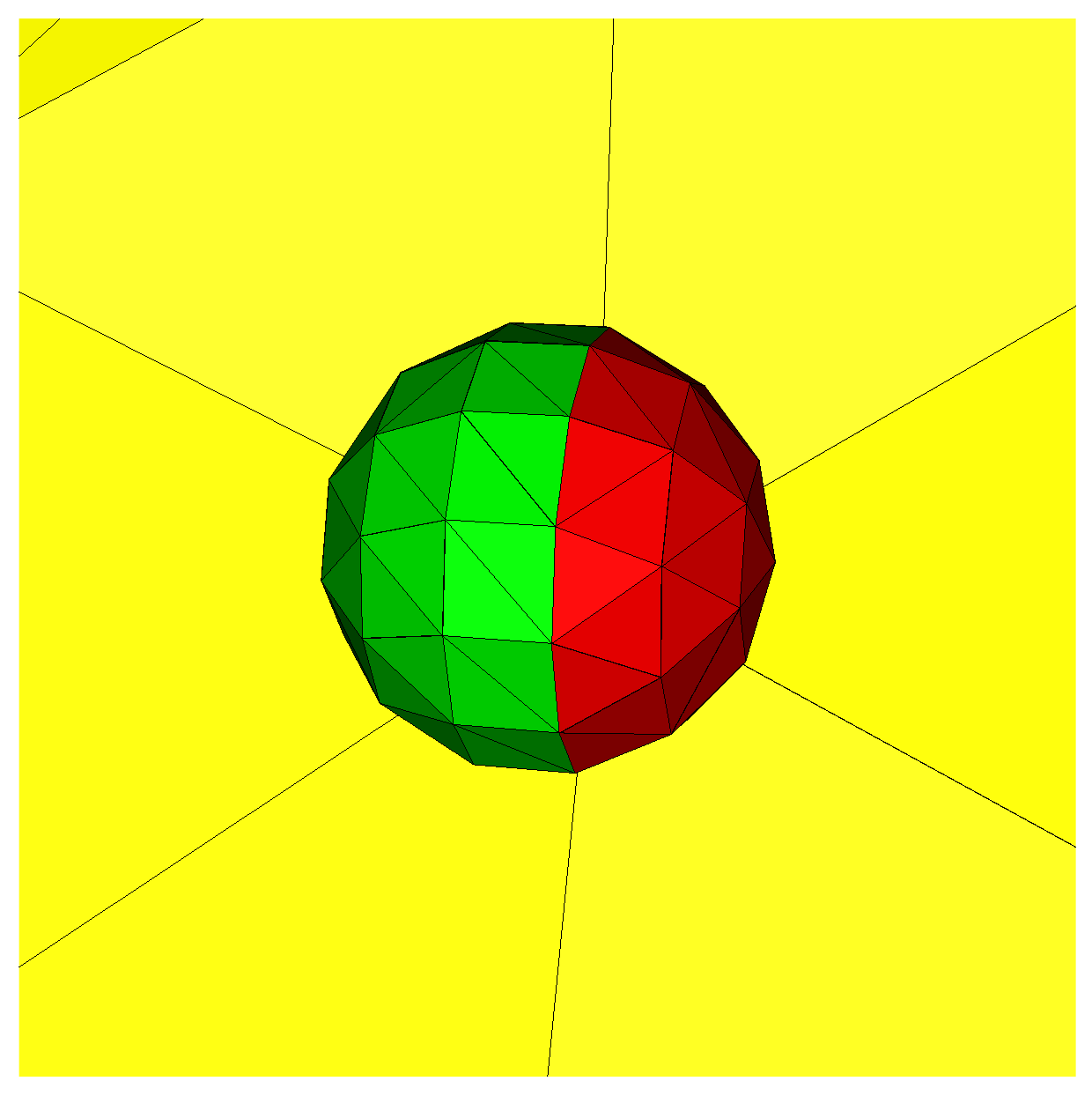}}
}
\caption{Recursize spectral bisection of the single hole domain
         into four subdomains (boundary surfaces of three of the
         four subdomains are shown).}
   \label{Holst:fig:gr1}
\end{figure}

\begin{figure}[ht]
\centerline{
\mbox{\includegraphics[width=0.5\textwidth]{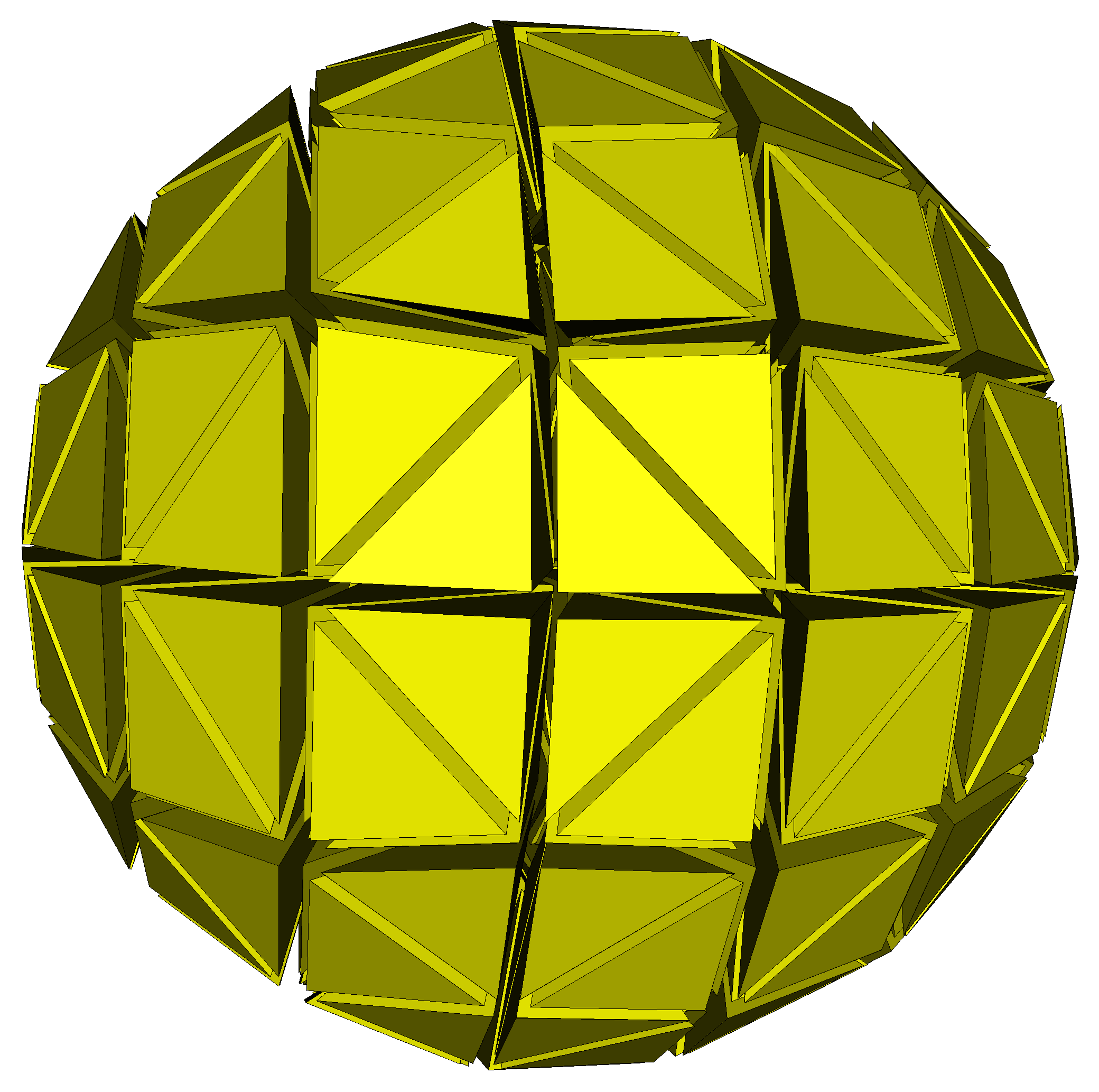}}
\mbox{\includegraphics[width=0.5\textwidth]{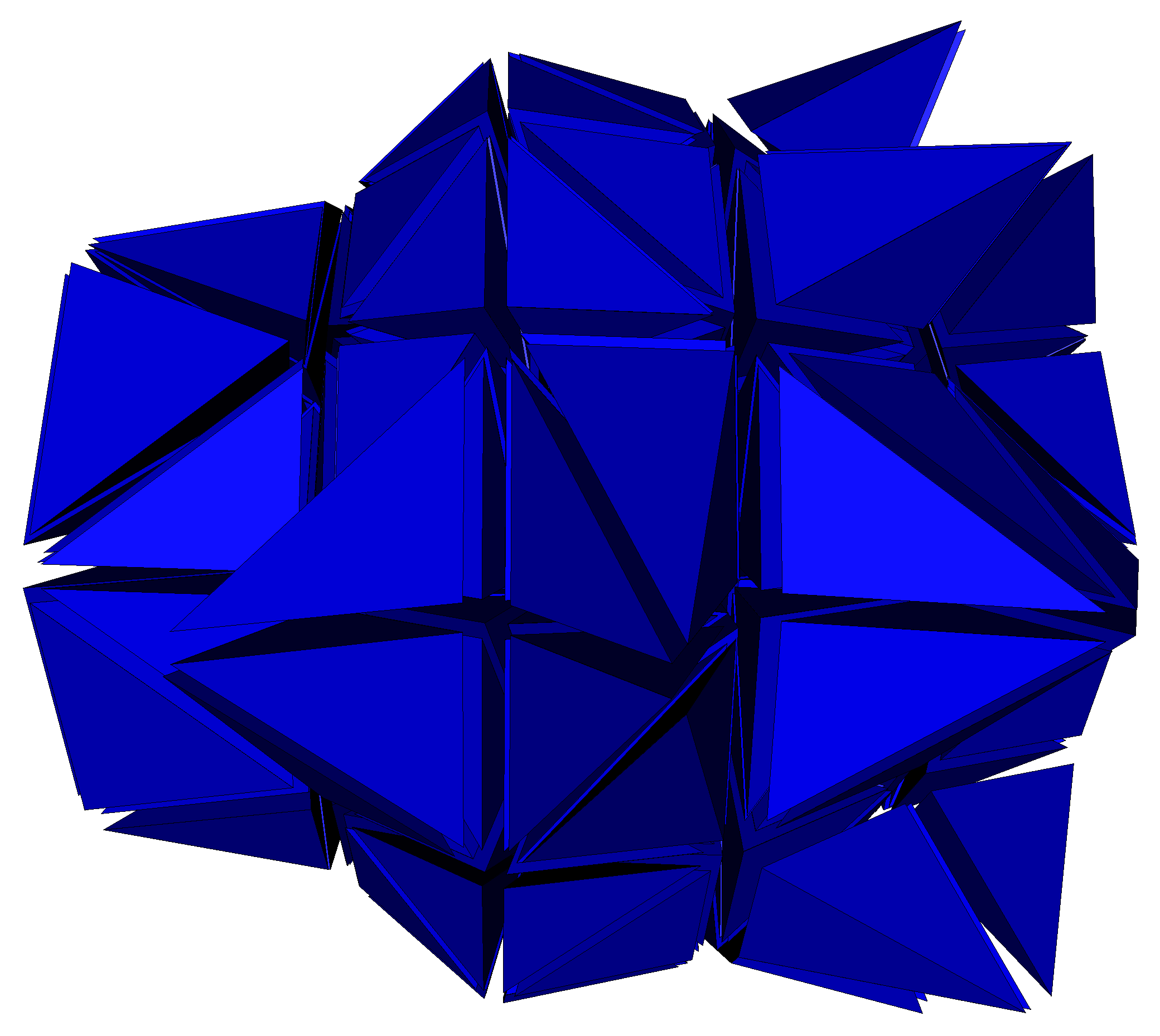}}
}
\centerline{
\mbox{\includegraphics[width=0.5\textwidth]{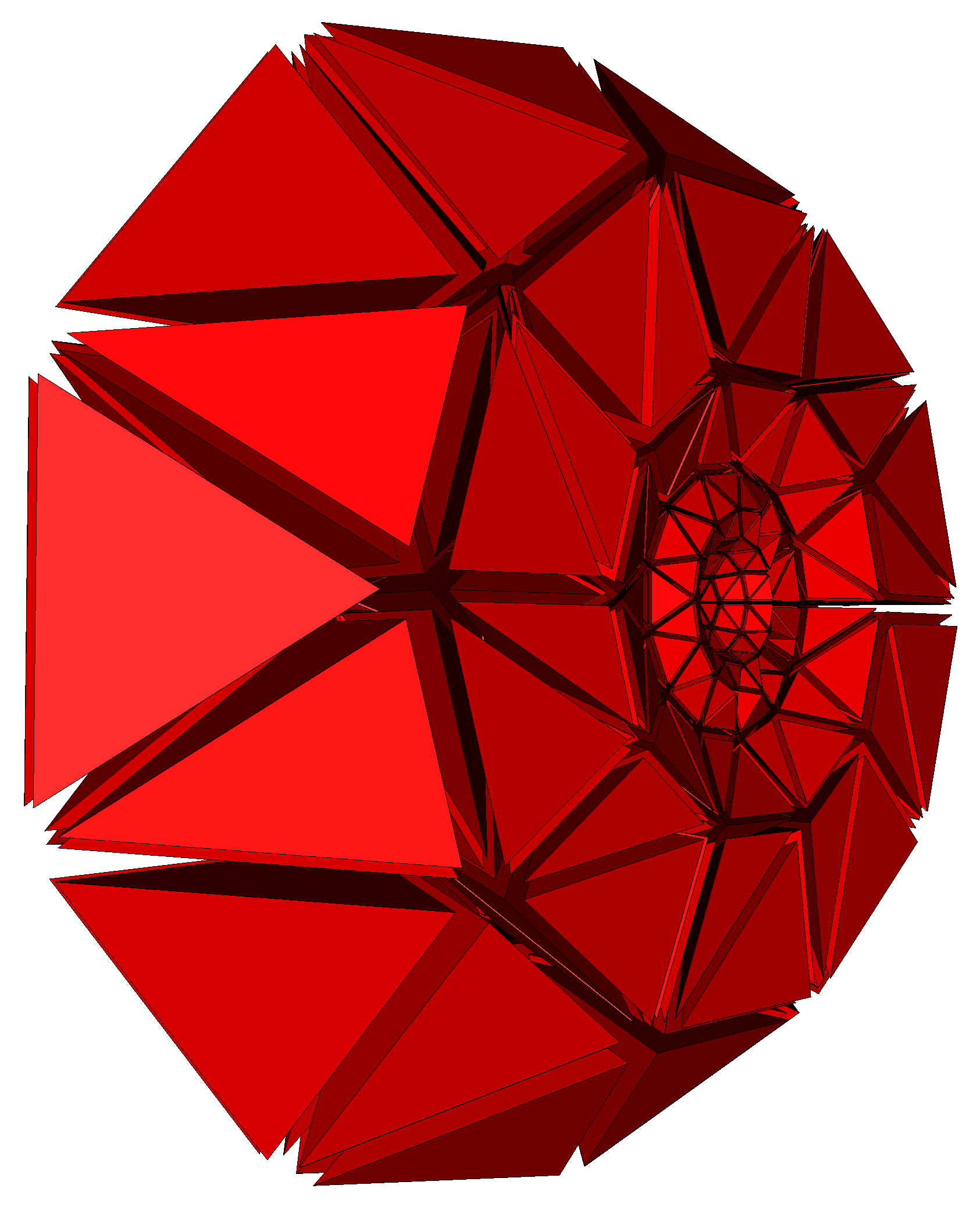}}
\mbox{\includegraphics[width=0.5\textwidth]{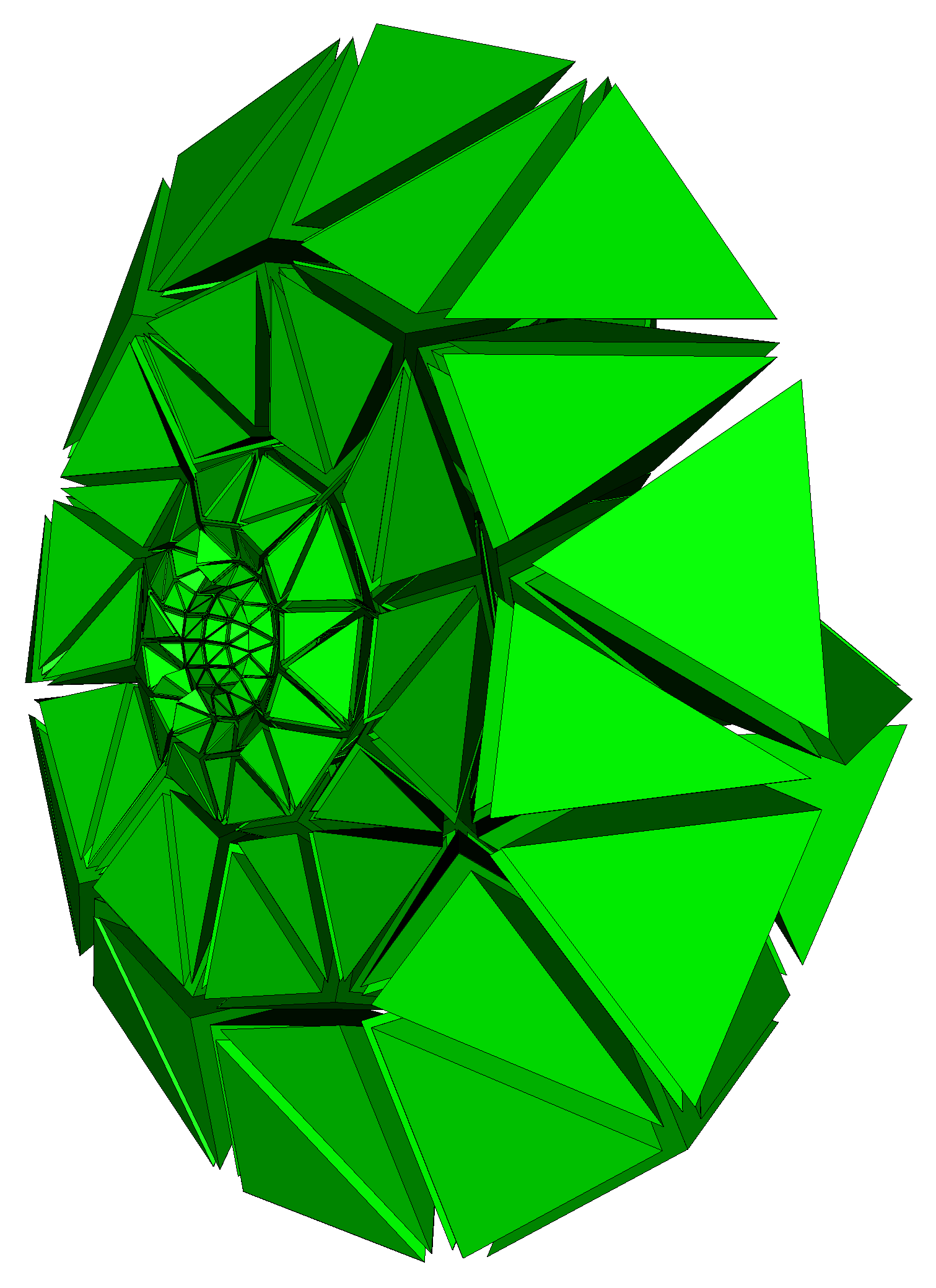}}
}
\caption{Recursize spectral bisection of the single hole domain
         into four subdomains.}
   \label{Holst:fig:gr2}
\end{figure}

\begin{figure}[ht]
\centerline{
\mbox{\includegraphics[width=0.5\textwidth]{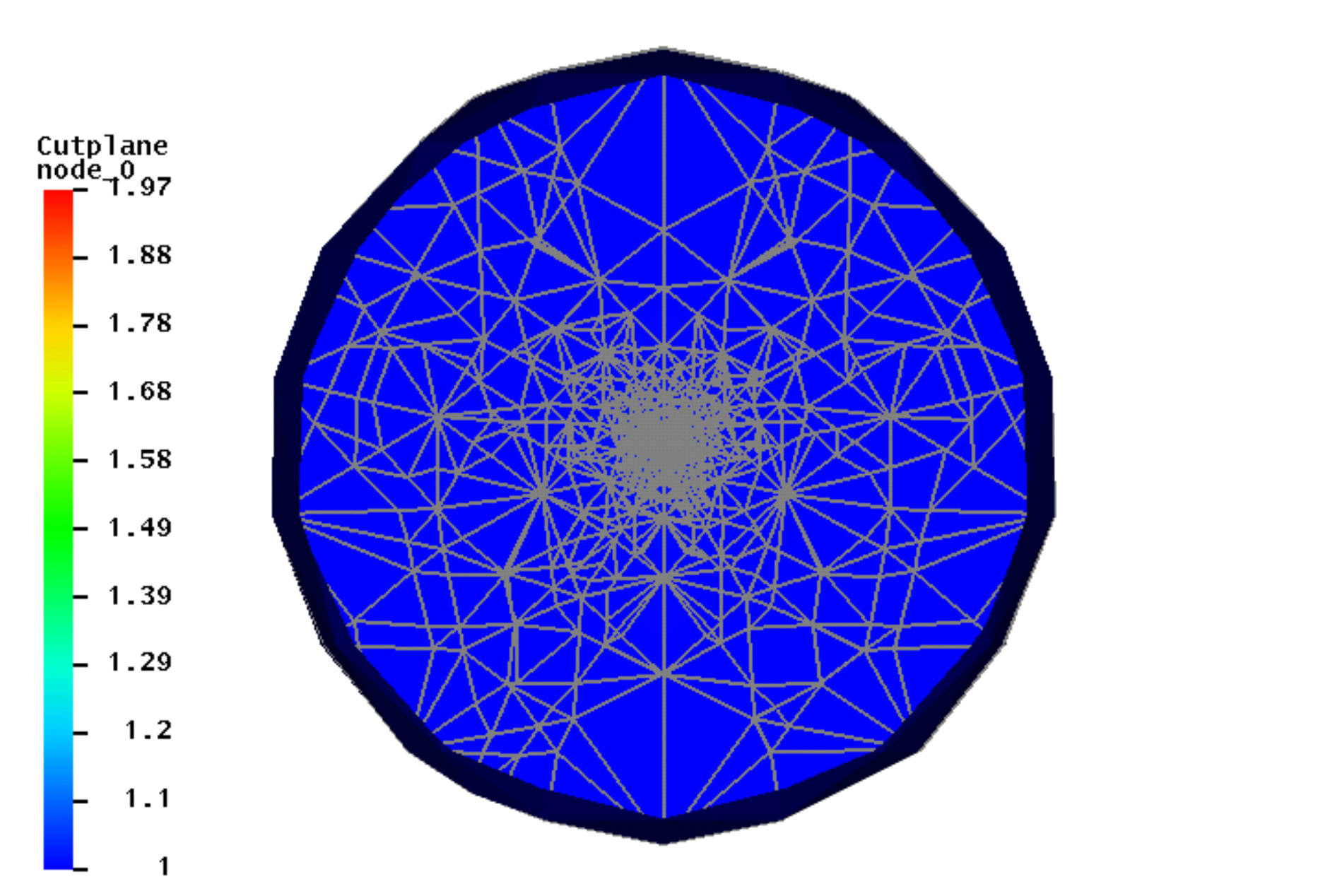}}
\mbox{\includegraphics[width=0.5\textwidth]{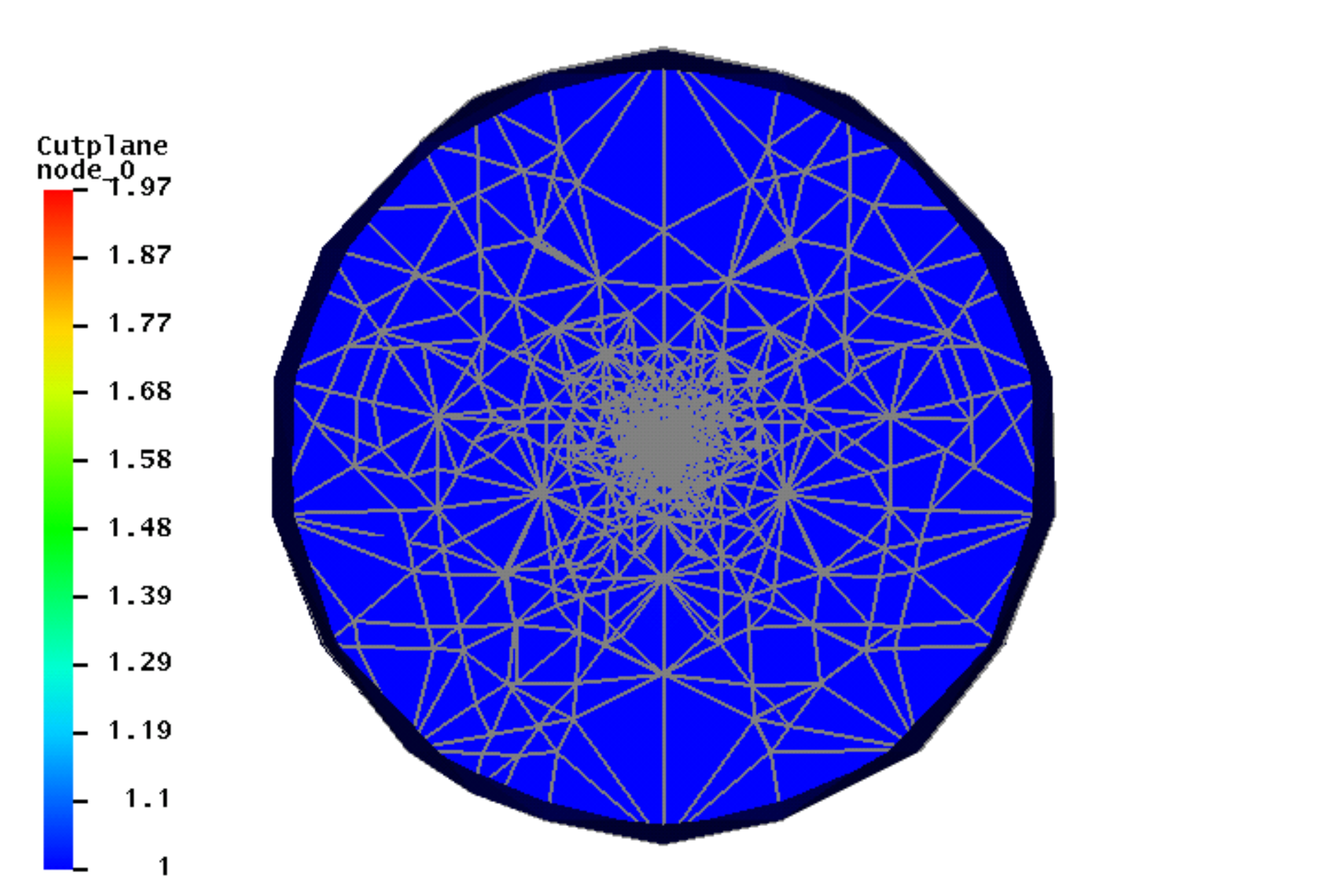}}
}
\centerline{
\mbox{\includegraphics[width=0.5\textwidth]{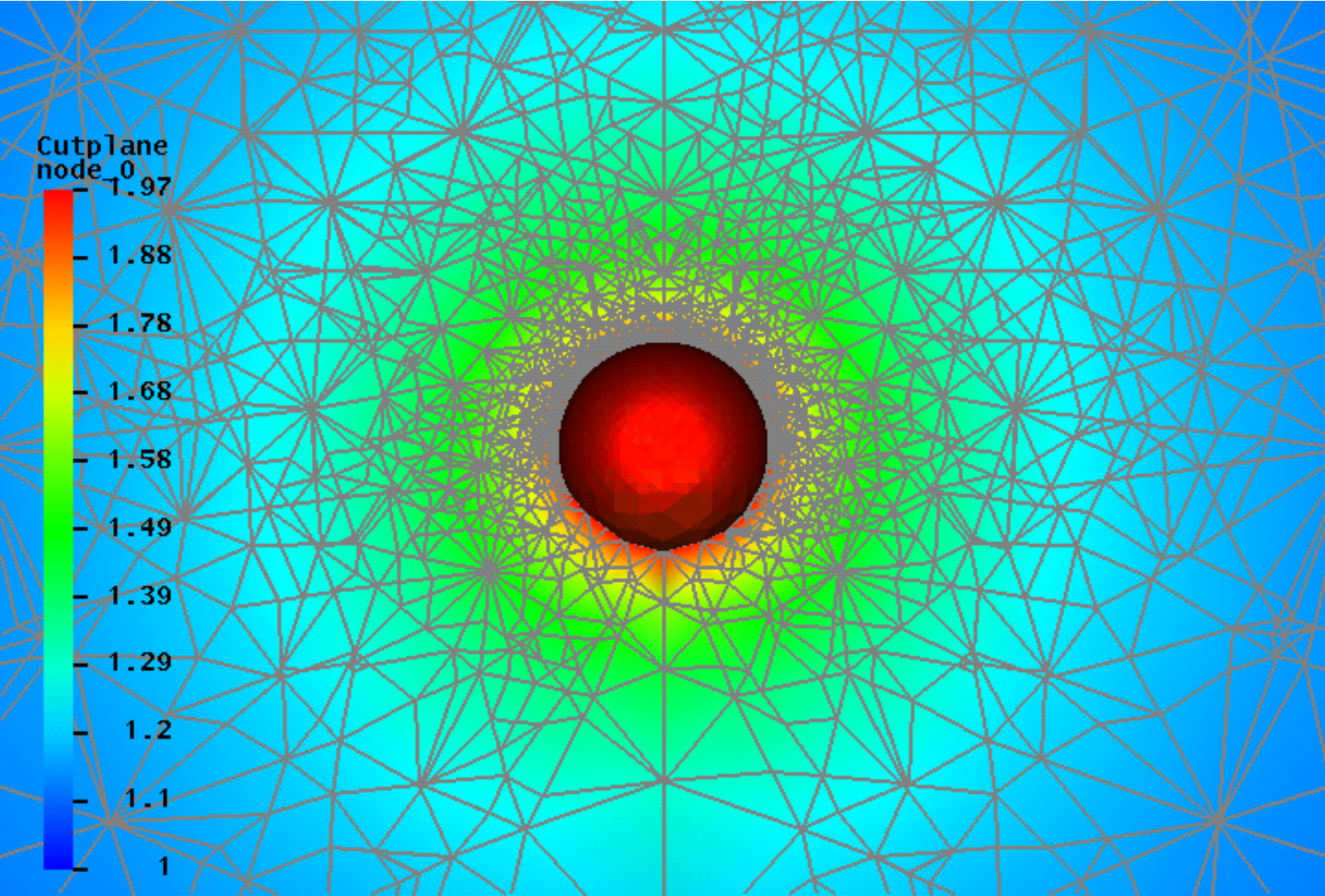}}
\mbox{\includegraphics[width=0.5\textwidth]{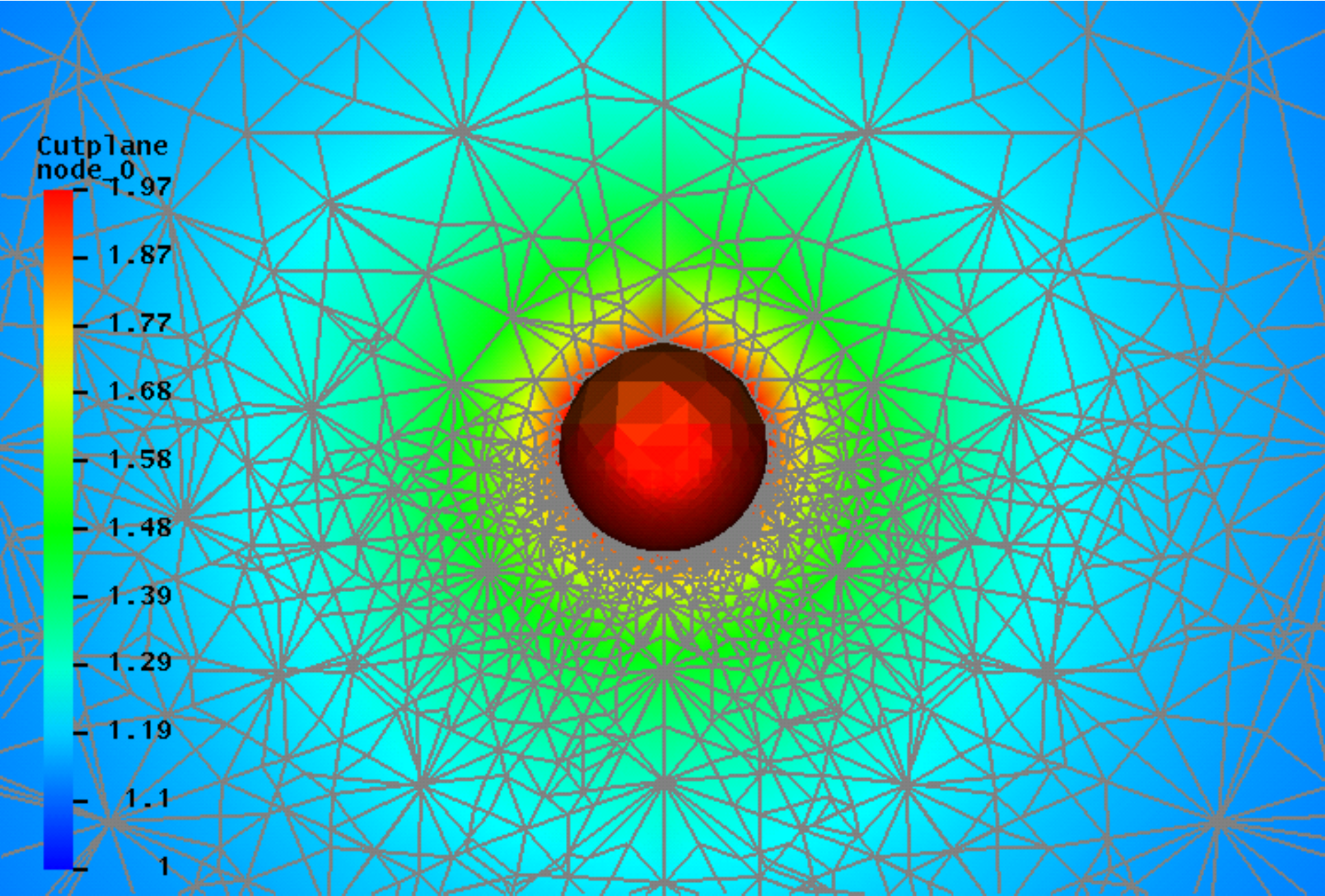}}
}
\centerline{
\mbox{\includegraphics[width=0.5\textwidth]{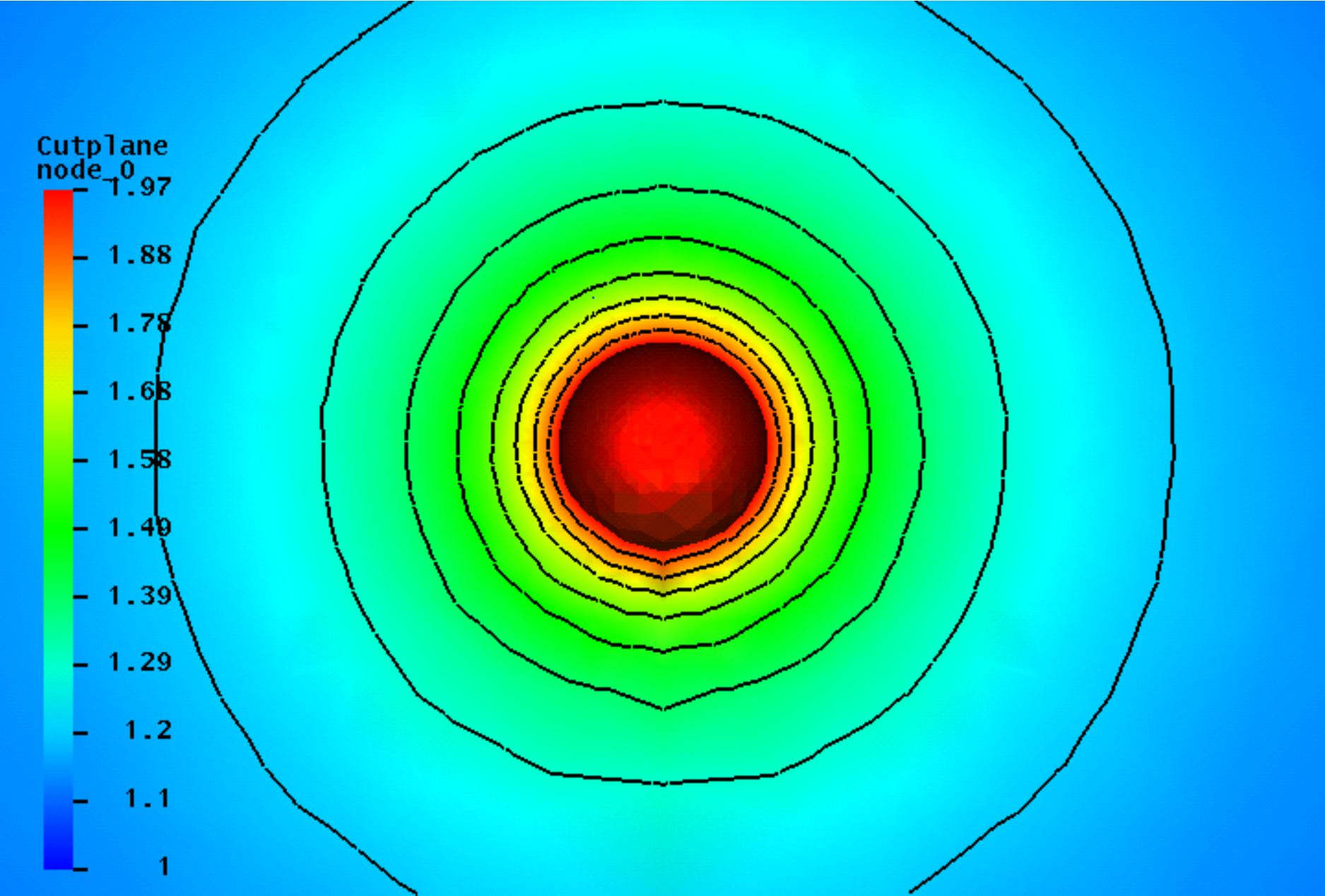}}
\mbox{\includegraphics[width=0.5\textwidth]{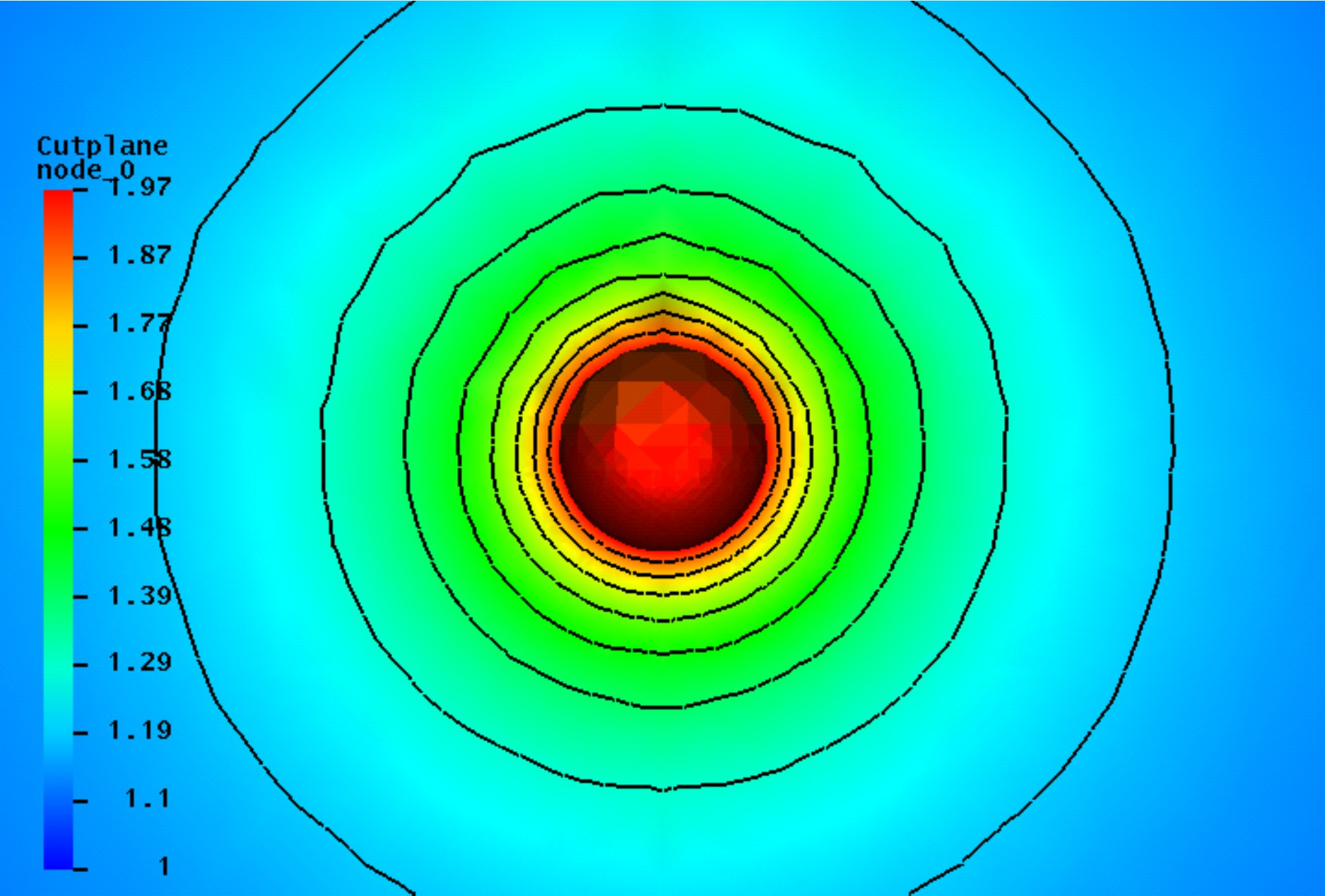}}
}
\centerline{
\mbox{\includegraphics[width=0.5\textwidth]{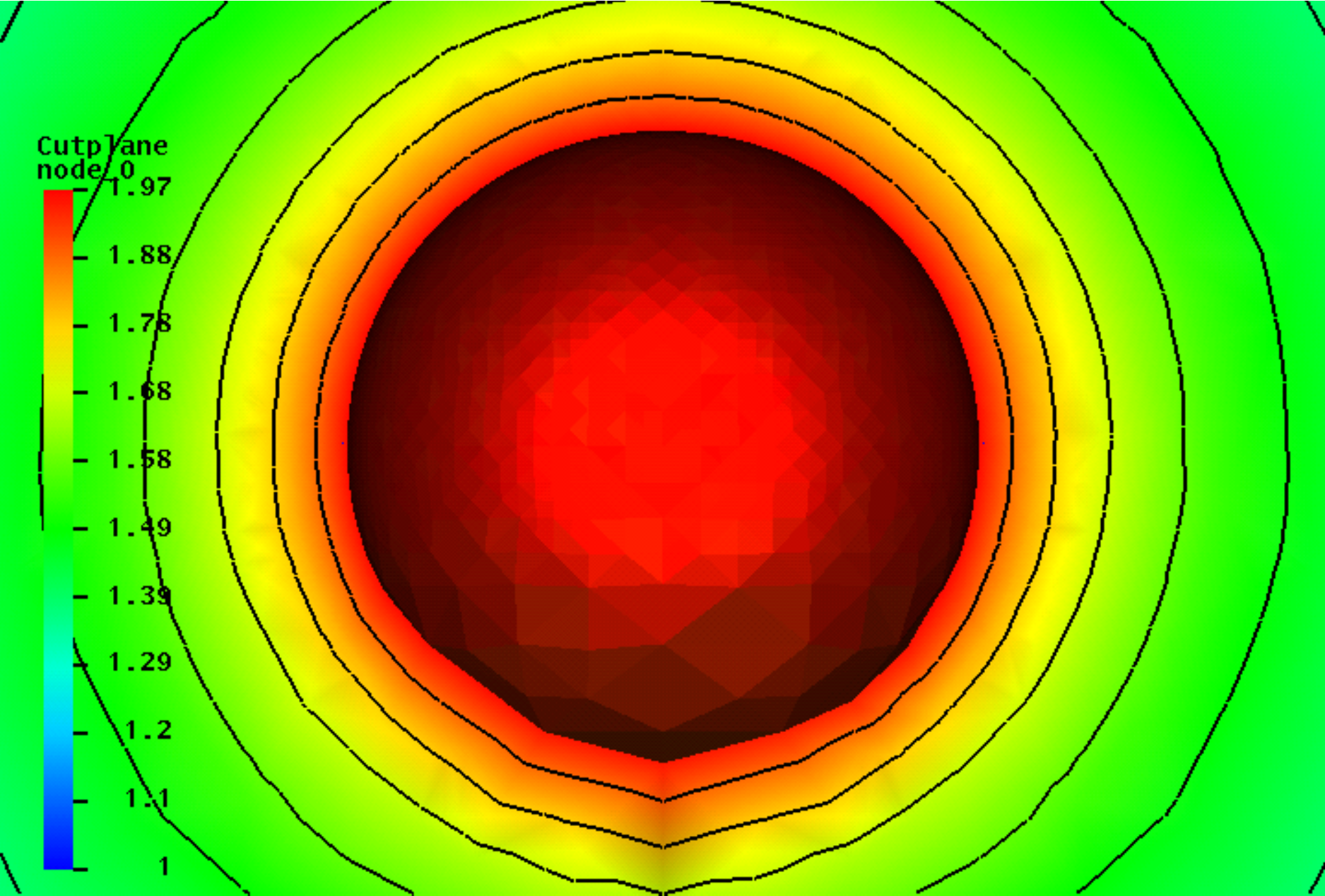}}
\mbox{\includegraphics[width=0.5\textwidth]{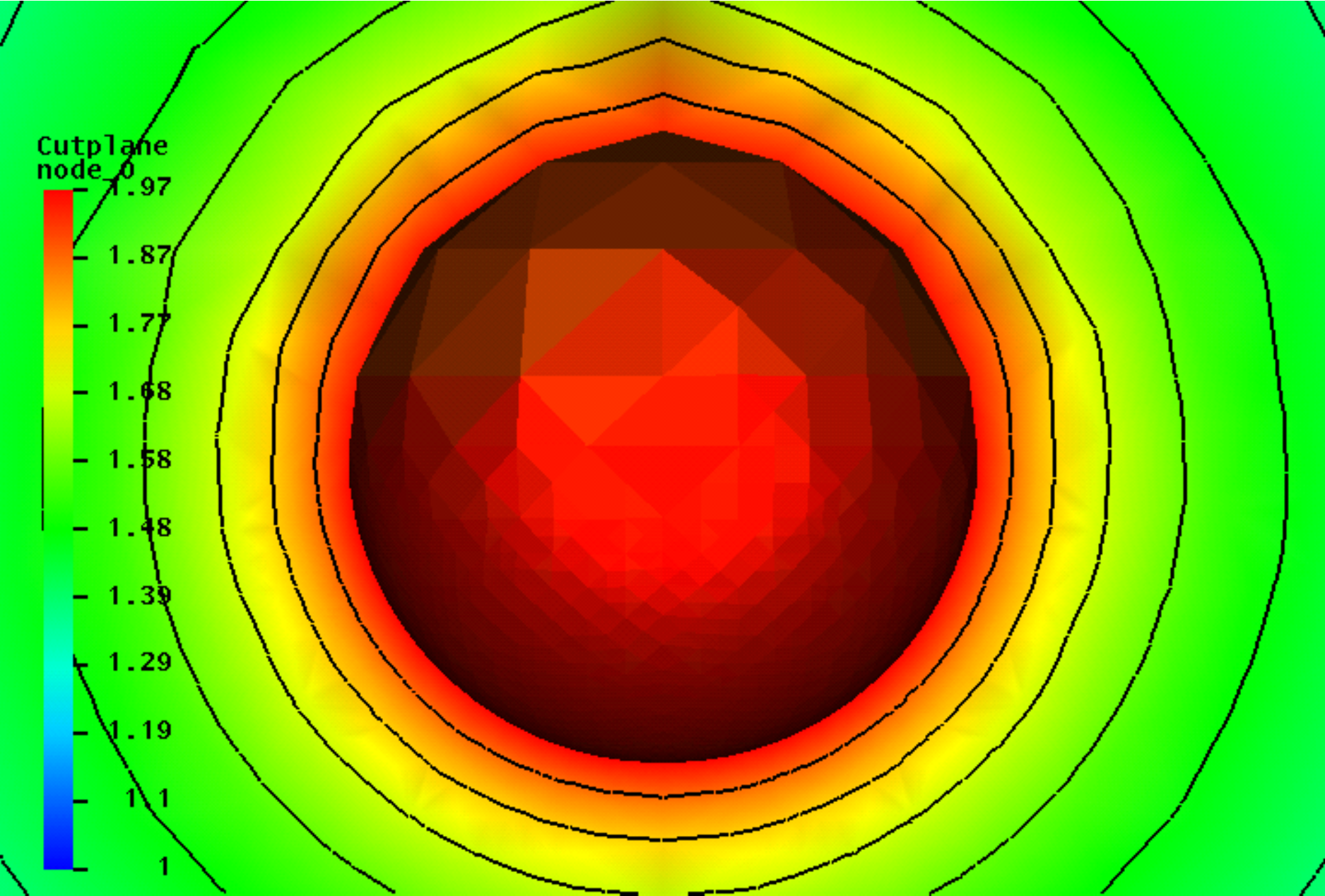}}
}
\caption{Decoupling of the scalar conformal factor in the initial data
         using PPUM; domain 0 in the left column, and domain 1 on the right.}
   \label{Holst:fig:gr3}
\end{figure}

\bibliographystyle{abbrv}
\bibliography{m}


\vspace*{0.5cm}

\end{document}